\numberwithin{equation}{section}
\newtheorem{thm}{Theorem}
\numberwithin{thm}{section}
\newtheorem{defn}[thm]{Definition}
\newtheorem{prop}[thm]{Proposition}
\newtheorem{lemma}[thm]{Lemma}
\newtheorem{rmk}[thm]{Remark}
\newtheorem{ex}[thm]{Example}
\begin{document}

\title{On the scaling methods by Pinchuk and Frankel}

\author{Seungro Joo}
\address{(Joo) Department of Mathematics, POSTECH, 
Pohang 790-784 The Republic of Korea}%
\email{beartan@postech.ac.kr}
\thanks{This research was supported in part by the Grant 2011-0030044 (The SRC-GAIA, PI : K.-T. Kim) of the National Research Foundation of The Republic of Korea.}
\subjclass[2010]{32H02, 32M17}%
\keywords{Holomorphic mappings, Automorphisms of $\mathbb{C}^2$}%

\begin{abstract}
The main purpose of this paper is to study two scaling methods developed respectively by Pinchuk and Frankel. We introduce first a continuously-varying global coordinate system, and give an alternative proof to the convergence of Pinchuk's scaling sequence (and of our modification) on bounded domains with finite type boundaries in $\mathbb{C}^2$. Using this, we discuss the modification of the Frankel scaling sequence on nonconvex domains. We also observe that two modified scalings are equivalent.
\end{abstract}

\maketitle

\section{Introduction}

The scaling methods were introduced by Pinchuk \cite{p} and Frankel \cite{f} independently in 1980's as a technique to study bounded domains in $\mathbb{C}^n$ with noncompact automorphism group. These techniques have been developed further by many authors and have become an important tool to prove the results of \cite{p}, \cite{bp1}, \cite{k} and others.

Pinchuk's scaling sequence was constructed by a sequence of compositions of stretching maps, say $\Lambda_j$, and automorphisms $\phi_j$ of the given domain $\Omega$. If the automorphism group of $\Omega$ is noncompact then, in many cases, there is a sequence $\{\phi_j\} \subset Aut(\Omega)$ which contracts compact subsets successively to some boundary point. On the other hand, a sequence of stretching maps is a divergent sequence of shear maps, the composition of $\mathbb{C}$-affine maps and triangular maps. The general expectation is that there is a subsequence of the Pinchuk scaling sequence $\{ \sigma_j := \Lambda_j \circ \phi_j \}$ convergent to the limit map, say $\widehat{\sigma}$, uniformly on compact subsets of $\Omega$. If this limit were 1-1, then it would be a re-embedding of $\Omega$ into $\mathbb{C}^n$. If $\Omega$ is a domain in $\mathbb{C}$ with smooth boundary, then the image of the limit map turns out to be a half plane. This is the special case of the Riemann mapping theorem and therefore it seems natural to hope for the convergence of $\{ \sigma_j \}$ in all dimensions.

As the first result in the higher dimensions, Pinchuk proved that his scaling sequence has a subsequence that converges to a biholomorphism uniformly on compact subsets for the class of bounded strongly pseudoconvex domains in all dimensions \cite{p}; this proves Wong-Rosay theorem \cite{w,r}. And later, Bedford and Pinchuk \cite{bp1} showed the convergence of the sequence if the domain is bounded with a finite type boundary in the sense of D'Angelo \cite{d}.

One of the difficulties in proving the convergence is that the expected limit domain $\widehat{\Omega}$ is not bounded; its Kobayashi hyperbolicity is not {\it a priori} clear. Pinchuk considers, alternatively, the convergence of the backward scaling sequence $\{ \sigma_j^{-1} \}$. If the limit domain $\widehat{\Omega}$ is well-defined in some sense, then this sequence $\{ \sigma_j^{-1} \}$ always has a convergent subsequence by Montel's theorem. Now, a question arises naturally: is the limit map of the backward sequence 1-1? For the class of bounded domains in $\mathbb{C}^2$ whose boundaries are of finite type, Bedford and Pinchuk have given a general affirmative resolution \cite{bp1} (cf. \cite{bc} also). If the limit map is surjective, additionally, then it follows that the inverse of the limit map is actually the same as a subsequential limit of the initial Pinchuk scaling sequence. This establishes the Pinchuk scaling method.

The Frankel scaling sequence follows the same principle but its construction is different. Given a domain $\Omega$, a point $p \in \Omega$ and a sequence of automorphisms $\{ \phi_j \}$, it is defined directly by $\omega_j(\mathbf{z}) := [d\phi_j|_p]^{-1} (\phi_j (\mathbf{z}) - \phi_j (p))$. If $\{\phi_j (p)\}$ converges to some boundary point of $\Omega$, then $\{\phi_j\}$ cannot converge to another automorphism. In fact, $\lim_{j \rightarrow \infty} \det{(d\phi_j|_p)} = 0$. Then $[d\phi_j|_p]^{-1}$ diverges. So, Frankel's scaling method appears to be similar to Pinchuk's. The sequence $\{[d\phi_j|_p]^{-1}\}$ stretches in some sense, whereas the sequence $\{\phi_j\}$ contracts. Now one can naturally pose the question: when does Frankel's scaling sequence form a normal family? Frankel proved that it suffices for $\Omega$ to be convex and Kobayashi hyperbolic \cite{f}.
\medskip

The purpose of this article is summarized into the following:
\medskip

In Section 2, we introduce a special continuously-varying coordinate system, pertaining to the target boundary point. Using this coordinate system, we give another proof to the convergence of the Pinchuk scaling sequence on a bounded domain in $\mathbb{C}^2$ with smooth finite-type boundary. We feel that our proof is simpler and more straightforward than that of Berteloot/C\oe ur\'{e} \cite{bc}.

Section 3 concerns the Frankel scaling sequence. The convexity was essential for its convergence to a holomorphic embedding into $\mathbb{C}^n$. There has been a question whether it converges without convexity. Here, we give a modification of the Frankel scaling sequence so that they may converge also on some nonconvex domains, using a sequence $\{\psi_j\}$ of automorphisms of $\mathbb{C}^n$ that converges to another. Two examples are given to show several aspects of the (modified) Frankel scaling sequence.
\smallskip

Finally in Section 4, we observe that the limit maps, if they exist, of Pinchuk and modified Frankel's scaling sequences are equivalent. Notice that this generalizes a theorem of Kim/Krantz in \cite{kk} for the convex case.

In Appendix, we give a proof of an existence of the coordinate system introduced in Section 2.
\medskip

\section{The Pinchuk scaling sequence}

Recall the definition of the finite type in the sense of D'Angelo \cite{d}.
\begin{defn} \rm
Let $\Omega$ be a domain in $\mathbb{C}^n$ with smooth boundary. Let $q$ be a point in $\partial \Omega$ and $\rho$ be a local defining function of $\Omega$ at $q$. The {\it type} $\Delta (q) = \Delta (\Omega, q)$ at $q$ is the positive value defined by
\begin{equation} \nonumber
\Delta (q) := \sup_{h} \frac{\nu(\rho \circ h)}{\nu(h - q)},
\end{equation}
where $\nu(f)$ is the order of vanishing of $f$ at $0$ and the supremum is taken over all nontrivial analytic discs $h$ in $\mathbb{C}^n$ with $h(0) = q$.

The point $q$ is called a {\it finite type boundary point} of $\Omega$ if $\Delta (q)$ is finite. If all the boundary points of $\Omega$ is of finite type, then $\Omega$ is called a {\it domain with finite type boundary}.
\end{defn}

From now on, the main subject is the bounded domain $\Omega$ with noncompact automorphism group. Under this condition, there is a point $p \in \Omega$ and a sequence $\{\phi_j\} \subset Aut(\Omega)$ with $\lim_{j \to \infty} \phi_j(p) = \widehat{p}$ for some boundary point $\widehat{p}$. We call such sequence $\{\phi_j(p)\}$ a {\it boundary accumulation automorphism orbit} converging to $\widehat{p}$, and present the following improvement upon the scaling theorem of Pinchuk.
\begin{thm}
Let $\Omega$ be a bounded domain in $\mathbb{C}^2$ with smooth pseudoconvex boundary. Assume that $Aut(\Omega)$ admits a boundary accumulating automorphism orbit $\{\phi_j(p)\}$ converging to $\widehat{p} \in \partial \Omega$. If $\widehat{p}$ is of finite type in the sense of D'Angelo, then there is a sequence $\{ \Lambda_j \}$ in $Aut(\mathbb{C}^2)$ such that the sequence $\{ \Lambda_j \circ \phi_j \}$ has a subsequence that converges to a biholomorphism-into $\mathbb{C}^2$ uniformly on compact subsets of $\Omega$. Moreover, the image of $\Omega$ by the limit map is of the form $\{ (w, z) \in \mathbb{C}^2 \mid \textrm{Re}\,w + P(z, \bar{z}) < 0\}$ for some subharmonic polynomial $P$ with no harmonic terms.
\end{thm}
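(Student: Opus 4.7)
The plan is to use the continuously-varying coordinate system of Section 2 to construct the stretching automorphisms $\Lambda_j$ in a uniform manner, and then follow the Pinchuk strategy of analyzing the backward scaling sequence. First, for each $j$, let $q_j \in \partial\Omega$ be a point nearest to $\phi_j(p)$ and set $\epsilon_j := \operatorname{dist}(\phi_j(p),\partial\Omega)$; note $q_j \to \widehat p$ and $\epsilon_j \to 0$. By the result of Section 2 (proved in the Appendix), there exist holomorphic coordinates $(w,z)$ centered at $q_j$, depending continuously on the base point, in which a local defining function of $\Omega$ takes the form $\operatorname{Re} w + P_j(z,\bar z) + R_j(w,z,\bar z)$, where $P_j$ is a subharmonic polynomial of degree at most $2m = \Delta(\widehat p)$ with no harmonic terms, and $R_j$ consists of higher-order remainder in the nonisotropic sense. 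Because the coordinates vary continuously with the base point, the polynomials $P_j$ converge coefficient-wise to a subharmonic polynomial $P$ with no harmonic terms.

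Next I would define $\Lambda_j \in \operatorname{Aut}(\mathbb{C}^2)$ as the composition of the coordinate chart at $q_j$, a translation sending $\phi_j(p)$ to the fixed base point $(-1,0)$, and a nonisotropic dilation $D_j(w,z) = (\epsilon_j^{-1}w,\,\epsilon_j^{-1/2m}z)$, possibly preceded by a shear removing pure weighted-harmonic terms. Under this rescaling, $\Lambda_j(\Omega)$ has a defining function that agrees, up to terms vanishing as $j \to \infty$, with $\operatorname{Re} w + P(z,\bar z)$; hence $\Lambda_j(\Omega)$ converges in the local Hausdorff sense to the model domain $\widehat\Omega := \{(w,z)\in\mathbb{C}^2 : \operatorname{Re} w + P(z,\bar z) < 0\}$, which is pseudoconvex and, because $P$ is subharmonic of positive degree with no harmonic terms, Kobayashi hyperbolic and taut.

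Set $\sigma_j := \Lambda_j \circ \phi_j : \Omega \to \Lambda_j(\Omega)$ and consider the backward sequence $\tau_j := \sigma_j^{-1}$. Since $\Omega$ is bounded, $\{\tau_j\}$ is locally uniformly bounded on $\widehat\Omega$, so by Montel some subsequence converges locally uniformly to a holomorphic map $\widehat\tau : \widehat\Omega \to \overline\Omega$. Non-constancy of $\widehat\tau$ follows from the normalization $d\sigma_j|_p$ built into the construction of $\Lambda_j$. The crux is to show that $\widehat\tau(\widehat\Omega) \subset \Omega$ and that $\widehat\tau$ is injective; for this I would combine the tautness of $\widehat\Omega$ with Catlin-type estimates on the Kobayashi metric near the finite-type boundary point $\widehat p$, exploiting the uniformity provided by the continuously-varying coordinates to replace the delicate diagonal arguments of Bedford--Pinchuk \cite{bp1} and Berteloot--Co\oe ur\'e \cite{bc}. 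Once $\widehat\tau$ is a biholomorphism onto its image, inverting gives a holomorphic embedding $\widehat\sigma = \widehat\tau^{-1} : \Omega \hookrightarrow \mathbb{C}^2$ with image $\widehat\Omega$, and a standard three-line argument shows that $\sigma_j \to \widehat\sigma$ locally uniformly on $\Omega$ along the same subsequence.

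The main obstacle is the injectivity of the backward limit $\widehat\tau$, which in all known proofs is the analytic core of the theorem. The new input in the present approach is that the $j$-dependence of the coordinate changes is absorbed into a single continuously-varying chart, so the pre-scaled defining functions form a \emph{uniformly converging} family rather than a diagonal family. This should allow the required boundary estimates to be transported from the fixed model $\widehat\Omega$ back to $\Omega$ without the ad hoc matching of frames that burdens the earlier treatments; the finite-type Kobayashi estimates themselves are of course still used as a black box.
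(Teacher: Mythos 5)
Your overall architecture (centering via the continuously-varying chart, anisotropic dilation, passage to the backward sequence, Montel, then injectivity) matches the paper's, but three steps that carry the real weight are either wrong or missing. First, the dilation $D_j(w,z)=(\epsilon_j^{-1}w,\ \epsilon_j^{-1/2m}z)$ with the fixed exponent $1/2m$ does not work: at the nearby boundary points $q_j$ the polynomial $P_j$ produced by the chart has degree at most $2k$ but is \emph{not} homogeneous, and its lower-degree coefficients tend to $0$ at an uncontrolled rate, so $\epsilon_j^{-1}P_j(\epsilon_j^{1/2m}z,\overline{\epsilon_j^{1/2m}z})$ can blow up (or its top-degree part can vanish in the limit). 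The paper instead chooses the second dilation factor $\delta_j$ adaptively, normalizing $\max_n\bigl\|\epsilon_j^{-1}P_{j,n}(\delta_j z,\overline{\delta_j z})\bigr\|_\infty=1$, which is exactly what forces a nonzero finite limit polynomial $\widehat P$ (Lemma 2.4); without this, your claim that the rescaled defining functions converge to $\operatorname{Re}w+P(z,\bar z)$ is unjustified.

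Second, the nondegeneracy of $\widehat\tau$ --- which you correctly identify as the analytic core --- is left as an appeal to ``Catlin-type estimates on the Kobayashi metric'' and tautness, with no actual argument. The paper proves it (Lemma 2.7) by a different and concrete mechanism: the Diederich--Forn{\ae}ss bounded strictly plurisubharmonic exhaustion $-(-\beta)^\eta$ is transported under the scalings to functions $\widetilde\beta_j$ converging to $-(-\widehat\rho)^\eta$, which is strictly plurisubharmonic at some point $q_0\in\widehat\Omega$; Sibony's metric $F_S$ then admits a uniform lower bound at $q_0$ on $\Lambda_j(\Omega)$, and its biholomorphic invariance forces $d\widehat\tau|_{q_0}$ to be nonsingular, after which Hurwitz gives nonsingularity everywhere. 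Third, you invert $\widehat\tau$ and assert that the limit of $\sigma_j$ is defined on all of $\Omega$ with image $\widehat\Omega$, but $\widehat\tau^{-1}$ is only defined on $\widehat\tau(\widehat\Omega)$, which a priori may be a proper subdomain of $\Omega$; surjectivity of $\widehat\tau$ is a separate nontrivial step (Proposition 2.10 in the paper, proved by running a second scaling sequence based at a putative point $p'\in\partial(\widehat\tau(\widehat\Omega))\cap\Omega$ and comparing the two limit domains through polynomial automorphisms of degree at most $2k$). As written, your argument yields at best a subsequential limit that embeds $\widehat\tau(\widehat\Omega)$ rather than all of $\Omega$, and does not identify the image with $\{\operatorname{Re}w+P(z,\bar z)<0\}$.
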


This stretching sequence $\Lambda_j$ is an automorphism of $\mathbb{C}^2$. Indeed, the map $\Lambda_j$ is a composition of shear maps (cf. Section 2.2). This property plays an important role in proving Theorem 4.1. We call this $\Lambda_j \circ \phi_j$ the ($j$-th) {\it Pinchuk scaling map} of the {\it Pinchuk scaling sequence} $\{ \Lambda_j \circ \phi_j \}$.
\bigskip

\subsection{An admissible coordinate system for finite type boundary.} To prove Theorem 2.2, we introduce a continuously-varying coordinate system near the target boundary point.
\begin{restatable}{prop}{keylemma}
Let $\Omega$ be a domain in $\mathbb{C}^2$ with a smooth boundary. Fix a boundary point $p$ and an integer $r > 0$. Assume that the outward unit normal vector of $\partial\Omega$ at $p$ is $(1, 0)$. Then there is a neighborhood $U$ of $p$ and a continuous map $\Psi \colon (\partial\Omega \cap U) \times \mathbb{C}^2 \rightarrow \mathbb{C}^2$ which satisfies, for each $q \in \partial\Omega \cap U$, the following properties:
\begin{enumerate}
\item The map $\Psi_q := \Psi (q, \cdot)$ is the composition of a translation, a dilation and a triangular map.
\item $\Psi_q (q) = (0, 0)$.
\item The local defining function of $\Psi_q (\Omega \cap U)$ at $(0, 0)$ is represented by
\begin{equation} \nonumber
\left\{ (w, z) \mid \textrm{Re}\,w + P(z, \bar{z}) + R(z, \bar{z}) + \textrm{Im}\left( \frac{w}{c} \right) Q \left( \textrm{Im}\left( \frac{w}{c} \right), z, \bar{z} \right) < 0 \right\}
\end{equation}
where $P$ is a real-valued polynomial of degree $r$ with no harmonic terms, $c = c(q)$ is a constant satisfying $\textrm{Re}\,c \neq 0$ and $\lim_{q \to p} c(q) = 1$, $R$ and $Q$ are real-valued smooth functions with the conditions on the vanishing order $\nu(R(z, \bar{z})) > r$ and $\nu \left(Q \left( \textrm{Im}\left( \frac{w}{c} \right), z, \bar{z} \right) \right) \ge 1$.
\end{enumerate}
Moreover, if the point $q \in \partial\Omega$ is pseudoconvex of finite type $2k$ and $r = 2k$, then $P$ is a homogeneous polynomial of degree $2k$.
\end{restatable}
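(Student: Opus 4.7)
The plan is to build $\Psi_q$ as an explicit composition of three elementary maps depending continuously on $q$: a translation that sends $q$ to the origin, a polynomial triangular map killing the pure holomorphic (hence, by reality, also pure antiholomorphic) terms in $z$ up to order $r$, and a complex dilation in $w$ normalizing the linear part of $\rho$ to $\textrm{Re}\,w$. Write $A(q):=\rho_w(q)$ and $B(q):=\rho_z(q)$; both are smooth in $q$ with $A(p)=1/2$ and $B(p)=0$, so $A(q)\neq 0$ near $p$. After translating $q$ to the origin the defining function has leading part $2\textrm{Re}(A\tilde w + Bz)$. I would next inductively construct a polynomial $g_q(z)=\sum_{k=1}^{r}g_{q,k}z^k$, with $g_{q,1}=B/A$ and higher coefficients chosen so that the triangular map $(w,z)\mapsto (w-g_q(z),z)$ removes all monomials $z^k$ and $\bar z^k$ of degree $\le r$ from the transformed defining function; each $g_{q,k}$ is a polynomial expression in the jet of $\rho$ at $q$, hence smooth in $q$. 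Finally I would dilate by $w:=2A(q)\tilde w$, that is, set $c(q):=2A(q)$; then $c(p)=1$, $\textrm{Re}\,c\neq 0$ near $p$, the linear term becomes exactly $\textrm{Re}\,w$, and the previous tangential variable $\textrm{Im}\,\tilde w$ is now $\textrm{Im}(w/c)$.

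The resulting defining function is smooth in $(\textrm{Re}\,w,\textrm{Im}(w/c),z,\bar z)$ with $\partial_{\textrm{Re}\,w}\rho(0)=1$, so by the implicit function theorem (after multiplying by a positive smooth factor, which does not alter the zero set) it equals $\textrm{Re}\,w+F(\textrm{Im}(w/c),z,\bar z)$ for a unique smooth $F$. Decomposing $F(\xi,z,\bar z)=F(0,z,\bar z)+\xi\,Q(\xi,z,\bar z)$ and then expanding $F(0,z,\bar z)=P(z,\bar z)+R(z,\bar z)$ as its degree-$r$ Taylor polynomial $P$ plus remainder $R$ of order $>r$ gives the claimed form. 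That $P$ has no harmonic terms survives the implicit function step because the corrections to $F(0,z,\bar z)$ beyond the leading pure-$(z,\bar z)$ part all carry factors from the triangular-normalized polynomial, and such factors have monomials with both $z$- and $\bar z$-degrees $\ge 1$; hence no pure $z^n$ or $\bar z^n$ monomial can be generated. Continuity of $\Psi$ in $q$ is immediate since $A(q)$, $B(q)$, $g_{q,k}$, and $c(q)$ are all smooth functions of the jet of $\rho$ at $q$.

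For the final assertion, when $q$ is pseudoconvex of finite type $2k$ in the D'Angelo sense and $r=2k$, the standard Bedford--Pinchuk/Catlin argument shows that $P$ is a subharmonic homogeneous polynomial of degree $2k$: pseudoconvexity forces the lowest nonzero homogeneous component of $P$ to be subharmonic, and any such component of degree $<2k$ would produce a polynomial analytic curve of contact order $<2k$ with $\partial\Omega$, contradicting the type hypothesis. The main technical obstacle is the inductive construction of $g_q$ in the triangular step: at each degree, the higher-order interactions of $g_q$ with the Taylor expansion of $\rho$ reintroduce harmonic monomials at degrees already treated, requiring an iterative readjustment of the $g_{q,k}$. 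This is a finite combinatorial procedure whose output depends polynomially on the jet of $\rho$ at $q$, so the required continuity of the whole scheme in $q$ is preserved.
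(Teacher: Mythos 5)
Your construction of $\Psi_q$ is essentially the paper's: translate $q$ to the origin, absorb the linear $\textrm{Im}\,w$ term by a complex dilation $w\mapsto c(q)w$ (the paper takes $c_q=1-ib_q$, you take $c(q)=2\rho_w(q)$ --- same effect), remove the harmonic monomials $z^k,\bar z^k$ of degree $\le r$ by an iterated triangular shear whose coefficients depend polynomially on the $r$-jet of $\rho$ at $q$, and finally split off $P+R$ and the $\textrm{Im}(w/c)\,Q$ term. The iteration you flag as the ``main technical obstacle'' is exactly what the paper's inductive definition of $h_{q,j}$, $R_{q,j}$, $Q_{q,j}$ carries out, and your continuity argument is the same. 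This part is fine.

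The genuine gap is in the last assertion. You write that a nonzero homogeneous component of $P$ of degree $m<2k$ ``would produce a polynomial analytic curve of contact order $<2k$ with $\partial\Omega$, contradicting the type hypothesis.'' That is not a contradiction: the D'Angelo type is a \emph{supremum} over analytic discs, so the existence of a disc with small normalized order of contact (e.g.\ any transverse disc, with ratio $1$) is always consistent with type $2k$. What must be shown is the reverse inequality: in the normal form $\textrm{Re}\,w+P+R+\textrm{Im}(w/c)\,Q$ with $P$ free of harmonic terms, \emph{every} disc $h=(f,g)$ has $\nu(\rho_q\circ h)/\nu(h)\le\max\{1,\nu(P)\}$, because the harmonic term $\textrm{Re}\,f$ cannot cancel against $P(g,\bar g)$; hence type $2k$ forces $\nu(P)\ge 2k$. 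Combined with the disc $z\mapsto(0,z)$, which shows $\nu(P)\le 2k$ (equivalently, that $P\not\equiv 0$, a direction your sketch also omits), and with $\deg P\le 2k$, this yields homogeneity. This two-sided case analysis is precisely what the paper's Appendix supplies with the discs $\delta_1$ and $\delta_2$; your argument as stated does not establish either inequality.
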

This coordinate system is a variation of that of \cite{c}. We shall present the proof of this in Appendix.
\bigskip

\subsection{Construction of the scaling sequence} Let $\Omega$ be a domain in the hypothesis of Theorem 2.2. Denote by $\widehat{p} := \lim_{j\rightarrow \infty} \phi_j(p)$, the orbit accumulation boundary point. Taking a unitary transformation if necessary, one may assume that the outward unit normal vector of $\partial\Omega$ at $\widehat{p}$ is $(1, 0)$. Set $r = 2k$, which is the same as the type at $\widehat{p}$, and apply Proposition 2.3 to $\widehat{p}$. Then there is a global coordinate map $\Psi_{\widehat{p}}^{\Omega} \in Aut(\mathbb{C}^2)$ with $\Psi_{\widehat{p}}^{\Omega}(\widehat{p}) = (0, 0)$ and a neighborhood $U$ of $\widehat{p}$ such that the local defining function of $\Psi_{\widehat{p}} (\Omega)$ at $(0, 0)$ is represented by:
$$
\Psi_{\widehat{p}}^{\Omega}(\Omega \cap U) = \{ (w, z) \in \Psi_{\widehat{p}}^{\Omega}(U) \mid \rho_{\widehat{p}} (w, z) < 0 \}
$$
where:
\begin{enumerate}
\item
$\rho_{\widehat{p}} (w, z) = \textrm{Re}\,w + P(z, \bar{z}) + R(z, \bar{z}) + \textrm{Im}\,w \cdot Q(\textrm{Im}\,w, z, \bar{z})$,
\item
$P$ is a real-valued homogeneous polynomial of degree $2k$ without harmonic terms,
\item $R$ and $Q$ are real-valued smooth functions with the conditions $\nu(R(z, \bar{z})) > 2k$ and $\nu(Q(\textrm{Im}\,w, z, \bar{z})) \ge 1$.
\end{enumerate}

Taking a subsequence if necessary, one may assume that $\{ \phi_j(p) \} \subset U$. Write $p_j := \Psi_{\widehat{p}}^{\Omega}(\phi_j(p))$. Then the sequence $\{p_j\}$ is in $\Psi_{\widehat{p}}^{\Omega}(U)$ and its limit is the origin $\mathbf{0} = (0, 0)$. For each $j$, let $q_j$ be an intersection point of the half-line $\{ p_j + (t, 0) \in \mathbb{C}^2 \mid t > 0 \}$ and $\partial(\Psi_{\widehat{p}}^{\Omega}(\Omega)) \cap \Psi_{\widehat{p}}^{\Omega}(U)$. Since $\partial\Omega$ is smooth, the point $q_j$ is uniquely determined for every sufficiently large $j$. Notice that the sequence $\{q_j\}$ also converges to the origin $\mathbf{0}$. Since the upper semicontinuity of D'Angelo type holds in this case, one can choose a subsequence $\{ q_{j_l} \}$ of $\{ q_j \}$ so that the type $\Delta(q_{j_l})$ is less than or equal to $2k$ for all $l$. For simplicity, denote this subsequence by $\{ q_j \}$ again.

Write $\Omega' := \Psi_{\widehat{p}}^{\Omega}(\Omega)$ and apply Proposition 2.3 again to the domain $\Omega'$, with ${\bf 0} \in \partial \Omega'$, at each boundary point $q_j$. Denote by $\Psi_j^{\Omega'} := \Psi_{q_j}^{\Omega'}$. Now one arrives at
\begin{equation} \nonumber
\Psi_j^{\Omega'} (\Psi_{\widehat{p}}^{\Omega}(\Omega \cap U)) = \{ \rho_j(w, z) < 0 \} \textrm{ in } \Psi_j^{\Omega'}(\Psi_{\widehat{p}}^{\Omega}(U))
\end{equation}
where:
\begin{enumerate}
\item
$\rho_j(w, z) = \textrm{Re}\,w + P_j(z, \bar{z}) + R_j(z, \bar{z}) + \textrm{Im}\left( \frac{w}{c_j} \right) Q_j \left( \textrm{Im}\left( \frac{w}{c_j} \right), z, \bar{z} \right)$,
\item
$P_j$ is a real-valued polynomial of degree $2k$ with no harmonic terms (not in general homogeneous),
\item
$R_j$ and $Q_j$ are real-valued smooth functions with the conditions $\nu(R_j) > 2k$ and $\nu(Q_j) \ge 1$,
\item
$c_j$ is a constant satisfying $\textrm{Re}\,c_j \neq 0$ and $\lim_{j \to \infty} c_j = 1$.
\end{enumerate}
Note that this local defining function is valid in $\Psi_j^{\Omega'}(\Psi_{\widehat{p}}^{\Omega}(U))$ since each $\Psi_j^{\Omega'}$ is a composition of a translation and a triangular map. Moreover, the continuity of the coordinate system $\Psi$ of Proposition 2.3 guarantees that $\Psi_j^{\Omega'}$ converges to the identity map uniformly on compact subsets of $\mathbb{C}^2$. Hence there is $J > 0$ and an open neighborhood $U'$ of the origin $\mathbf{0}$ so that
\begin{equation} \label{U'}
U' \subset \Psi_{\widehat{p}}^{\Omega}(U) \cap \Psi_j^{\Omega'}(\Psi_{\widehat{p}}^{\Omega}(U)) \textrm{ for all } j > J.
\end{equation}
Since $\rho_j$ converges to $\rho_{\widehat{p}}$ uniformly on compact subsets of $\mathbb{C}^2$, the convergences $P_j \rightarrow P$, $R_j \rightarrow R$ and $Q_j \rightarrow Q$ are also uniform on compact subsets of $\mathbb{C}^2$. We call this $\Psi_j^{\Omega'} \circ \Psi_{\widehat{p}}^{\Omega}$ as the ($j$-th) {\it centering map}.
\bigskip

\begin{figure}[!h]
\centering
\includegraphics[width=0.9\columnwidth]{./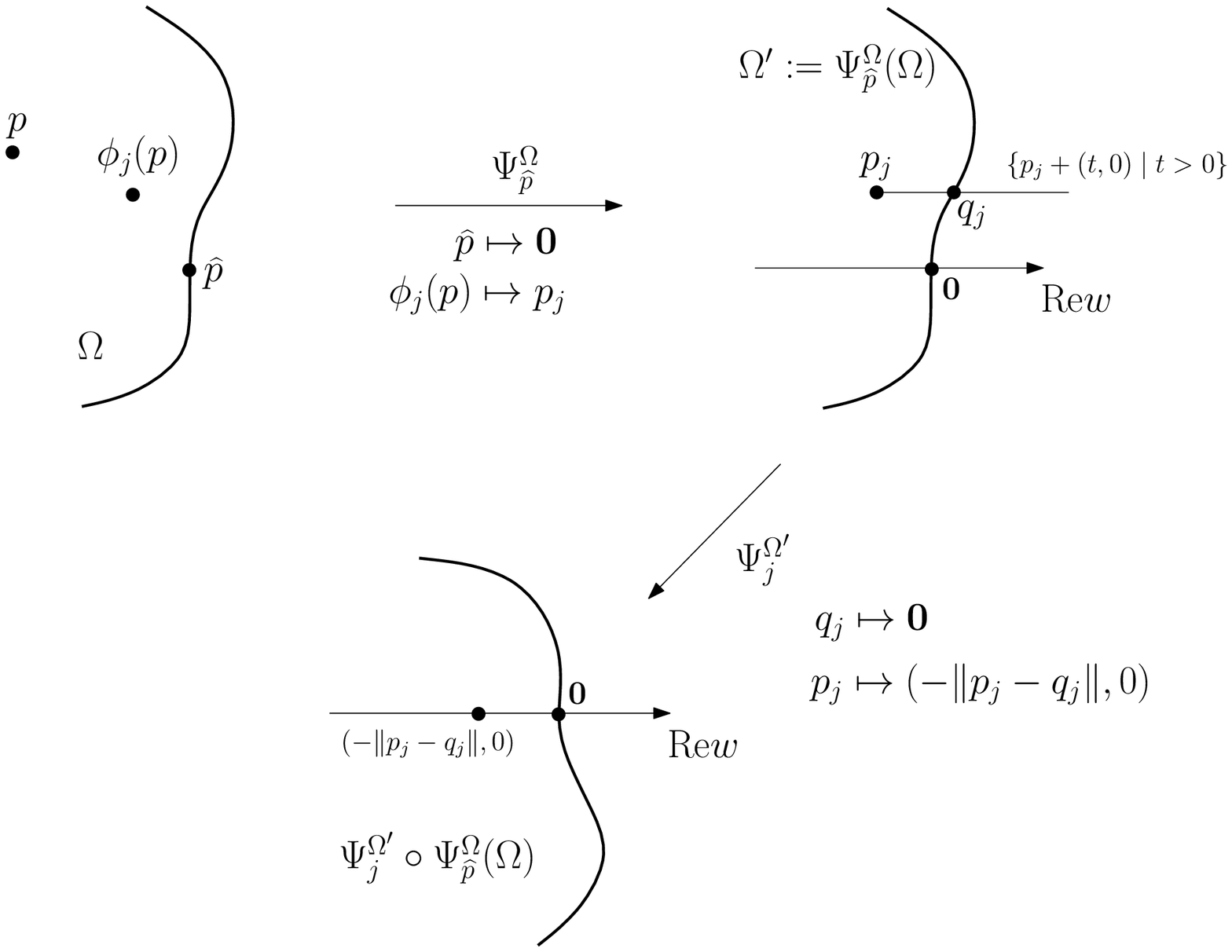}
\end{figure}
\begin{center}
{\bf Fig. 1.} The j-th centering map $\Psi_j^{\Omega'} \circ \Psi_{\widehat{p}}^{\Omega}$.
\end{center}
\bigskip
Next, we construct the sequence $\{D_j\}$ of dilations. Write $P_j (z, \bar{z}) = \sum_{n = 2}^{2k} P_{j,n} (z, \bar{z})$ where $P_{j,n}$ is a homogeneous polynomial of degree $n$. Define $\epsilon_j := \| p_j - q_j \|$ and choose $\delta_j > 0$ satisfying:
\begin{equation} \nonumber
\max \left\{ \left\| \frac{1}{\epsilon_j} P_{jn} ( \delta_j z, \overline{\delta_j z}) \right\|_{\infty}, n = 2, 3, \cdots, 2k \right\} = 1
\end{equation}
Here, the norm $\| \cdot \|_{\infty}$ is the $l^{\infty}$ norm on the space of polynomials as a finite sequence of coefficients. Let $D_j : (w, z) \mapsto \left( \frac{w}{\epsilon_j}, \frac{z}{\delta_j} \right)$ and denote by $\Lambda_j := D_j \circ \Psi_j^{\Omega'} \circ \Psi_{\widehat{p}}^{\Omega}$. Then
$$
(w, z) \in \Lambda_j (\Omega) \cap D_j (U') \Leftrightarrow (w, z) \in D_j (U') \textrm{ and } \widetilde{\rho_j}(w, z) < 0,
$$
where
$$
\widetilde{\rho_j}(w, z) = \textrm{Re}\,w + \frac{1}{\epsilon_j}P_j(\delta_j z, \delta_j \bar{z}) + \frac{1}{\epsilon_j}R_j(\delta_j z, \delta_j \bar{z}) + \textrm{Im} \left( \frac{w}{c_j} \right) Q_j \left( \textrm{Im} \left( \frac{\epsilon_j w}{c_j} \right), \delta_j z, \delta_j \bar{z} \right).
$$
\begin{lemma}
The sequence $\{ \widetilde{\rho_j} \}$ has a subsequence that converges to $\textrm{Re}\,w + \widehat{P} (z, \bar{z})$ uniformly on compact subsets of $\mathbb{C}^2$ where $\widehat{P}$ is a nonzero subharmonic polynomial of degree less than or equal to $2k$ with no harmonic terms.
\end{lemma}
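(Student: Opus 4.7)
The plan is to expand
\[
\widetilde{\rho_j}(w,z) = \textrm{Re}\,w + \sum_{n=2}^{2k} \frac{\delta_j^n}{\epsilon_j}\,P_{j,n}(z,\bar z) + \frac{1}{\epsilon_j}\,R_j(\delta_j z, \delta_j \bar z) + \textrm{Im}\!\left(\frac{w}{c_j}\right) Q_j\!\left(\textrm{Im}\!\left(\frac{\epsilon_j w}{c_j}\right), \delta_j z, \delta_j \bar z\right)
\]
and handle each piece separately. My first step would be to show $\delta_j \to 0$. Since $\widehat p$ has type exactly $2k$, Proposition 2.3 gives a nonzero homogeneous degree-$2k$ polynomial $P$ as the limit of $P_j$; hence the top component satisfies $P_{j,2k} \to P$ and $\|P_{j,2k}\|_\infty \ge c > 0$ for large $j$. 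Because the normalization of $\delta_j$ forces $\tfrac{\delta_j^{2k}}{\epsilon_j}\|P_{j,2k}\|_\infty \le 1$, one gets $\delta_j^{2k} \le C\epsilon_j$, so $\delta_j \to 0$.

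Next I would extract the limit polynomial $\widehat P$. Each coefficient (in $z,\bar z$) of $\sum_n \tfrac{\delta_j^n}{\epsilon_j}P_{j,n}$ has absolute value $\le 1$, so a Bolzano--Weierstrass argument in the finite-dimensional space of polynomials of degree $\le 2k$ in $(z,\bar z)$ yields a subsequence along which the entire polynomial converges coefficient-wise to some $\widehat P$ of degree $\le 2k$; since no $P_j$ has harmonic terms, neither does $\widehat P$. To see $\widehat P \neq 0$, for each $j$ choose an index $n_j$ achieving the maximum in the defining relation for $\delta_j$, pass to a subsequence making $n_j \equiv n^*$ constant, and use continuity of $\|\cdot\|_\infty$: the degree-$n^*$ component of $\widehat P$ then has norm exactly $1$.

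For the remainder terms, I would use the smoothness (not merely continuity) of the convergence $R_j \to R$ and $Q_j \to Q$ inherited from the coordinate family of Proposition 2.3, which gives a uniform Taylor bound $|R_j(\zeta,\bar\zeta)| \le C|\zeta|^{2k+1}$ on a fixed neighborhood of $0$. Then on any compact $K \subset \mathbb{C}^2$,
\[
\frac{1}{\epsilon_j}\,|R_j(\delta_j z, \delta_j \bar z)| \;\le\; C\,\frac{\delta_j^{2k+1}}{\epsilon_j}\sup_K |z|^{2k+1} \;=\; C\,\delta_j \cdot \frac{\delta_j^{2k}}{\epsilon_j}\sup_K |z|^{2k+1} \;\longrightarrow\; 0,
\]
using boundedness of $\tfrac{\delta_j^{2k}}{\epsilon_j}$ and $\delta_j \to 0$. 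For the third term, $c_j \to 1$ gives $\textrm{Im}(w/c_j) \to \textrm{Im}\,w$ locally uniformly, and $\nu(Q_j) \ge 1$ with all three arguments $\textrm{Im}(\epsilon_j w/c_j), \delta_j z, \delta_j \bar z$ tending to $0$ forces $Q_j(\cdots) \to 0$ uniformly on compacts.

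Finally, for subharmonicity of $\widehat P$: each $\{\widetilde{\rho_j} < 0\}$ is pseudoconvex (image of $\Omega$ under the automorphism $\Lambda_j$ of $\mathbb{C}^2$), and the convergences above are strong enough to give $C^2$-convergence of defining functions on compacta, so the rigid hypersurface $\{\textrm{Re}\,w + \widehat P(z,\bar z) = 0\}$ is pseudoconvex, which for a rigid model is equivalent to $\widehat P$ being subharmonic. The main obstacle I anticipate is justifying the smooth (not just $C^0$) convergence $R_j \to R$ and $Q_j \to Q$, which is needed both for the uniform Taylor bound on $R_j$ and for the $C^2$-passage to the limit. This goes slightly beyond the statement of Proposition 2.3 and should be read off from the explicit construction of $\Psi_q$ given in the appendix; the remainder of the argument is then straightforward bookkeeping on polynomial coefficients.
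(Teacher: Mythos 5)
Your argument is correct and follows essentially the same route as the paper's own (very terse) proof: the normalization of $\delta_j$ plus a Bolzano--Weierstrass extraction for the polynomial part, the bound $\delta_j^{2k} \lesssim \epsilon_j$ coming from the homogeneity and nonvanishing of the degree-$2k$ limit $P$, and vanishing-order estimates to kill the $R_j$ and $Q_j$ terms. You additionally supply the pseudoconvexity-in-the-limit argument for the subharmonicity of $\widehat{P}$ and correctly flag that the stated $C^0$ convergence of $R_j$ and $Q_j$ must be upgraded to convergence with derivatives (read off from the explicit construction of $\Psi_q$ in the appendix) --- two points the paper's proof silently elides.
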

\begin{proof}
The construction of $\delta_j$ and the local uniform convergence $P_j \rightarrow P$ guarantees that the sequence $\{\frac{1}{\epsilon_j}P_j(\delta_j z, \delta_j \bar{z})\}$ has a subsequence that converges to such a polynomial $\widehat{P}$. Moreover, the inequality $\epsilon_j \gtrsim \delta_j^{2k}$ holds since $P$ is homogeneous of degree $2k$. Then the term $\frac{1}{\epsilon_j} R_j(\delta_j z, \delta_j \bar{z})$ converges to the zero function owing to the conditions on $R_j$. The term $\textrm{Im} \left( \frac{w}{c_j} \right) Q_j \left( \textrm{Im} \left( \frac{\epsilon_j w}{c_j} \right), \delta_j z, \delta_j \bar{z} \right)$ converges to the zero function by a similar reason.
\end{proof}

In order to avoid using excessive indices, we shall keep $\{ \widetilde{\rho_j} \}$ for the subsequence and denote by $\widehat{\rho} := \lim \widetilde{\rho}_j$. So,
$$
\widehat{\rho} (w, z) := \textrm{Re}\,w + \widehat{P} (z, \bar{z}).
$$
Define $\widehat{\Omega} := \{ (w, z) \in \mathbb{C}^2 \mid \widehat{\rho} (w, z) < 0 \}$. Now we introduce the normal set-convergence to control the convergence of a sequence of holomorphic functions with domains varying. The following are a modification from Section 9.2.2 of \cite{gkk}.
\begin{defn} \rm
Let $\Omega_j$ be domains in $\mathbb{C}^n$ for each $j = 1, 2, \cdots$. The sequence $\Omega_j$ is said to {\it converge normally} to a domain $\widehat{\Omega}$ if the following two conditions hold.
\begin{enumerate}
\item For any compact set $K$ contained in the interior of $\bigcap_{j>m} \Omega_j$ for some positive integer $m$, $K \subset \widehat{\Omega}$.
\item For any compact subset $K'$ of $\widehat{\Omega}$, there exists a constant $m > 0$ such that $K' \subset \bigcap_{j>m} \Omega_j$.
\end{enumerate}
\end{defn}

\begin{prop}
If $\Omega_j$ is a sequence of domains in $\mathbb{C}^n$ that converges normally to the domain $\widehat{\Omega}$, then
\begin{enumerate}
\item If a sequence of holomorphic mappings $f_j : \Omega_j \rightarrow \Omega '$ from $\Omega_j$ to another domain $\Omega '$ converges uniformly on compact subsets of $\widehat{\Omega}$, then its limit is a holomorphic mapping from $\widehat{\Omega}$ into the closure of the domain $\Omega '$.
\item If a sequence of holomorphic mappings $g_j : \Omega ' \rightarrow \Omega_j$ converges uniformly on compact subsets of $\Omega '$, if $\widehat{\Omega}$ is pseudoconvex, and if there are a point $p \in \Omega '$ and a constant $c > 0$ so that the inequality $|\det{(dg_j|_p)}| > c$ holds for each $j$, then $\lim_{j \to \infty}g_j$ is a holomorphic mapping from the domain $\Omega '$ into $\widehat{\Omega}$.
\end{enumerate}
\end{prop}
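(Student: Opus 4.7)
The plan is to handle the two parts separately; the bulk of the work is in part (2).

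For part (1), I fix an arbitrary compact $K \subset \widehat\Omega$. Condition (2) of Definition 2.5 provides an index $m$ with $K \subset \bigcap_{j>m}\Omega_j$, so each $f_j|_K$ is defined and converges uniformly on $K$ for $j > m$. Weierstrass's theorem renders the limit $f := \lim f_j$ holomorphic on the interior of $K$, and since $K$ ranges over an exhaustion of $\widehat\Omega$, $f$ is holomorphic on all of $\widehat\Omega$. As $f_j(K) \subset \Omega'$ for every $j$, taking pointwise limits gives $f(K) \subset \overline{\Omega'}$.

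For part (2), uniform convergence on compacta immediately yields $g := \lim g_j$ holomorphic on $\Omega'$, and by Cauchy's estimates $\det dg_j \to \det dg$ uniformly on compacta, so $|\det dg(p)| \ge c > 0$. The essential task is to prove $g(\Omega') \subset \widehat\Omega$. I would first establish that $g(p) \in \widehat\Omega$ via a quantitative inverse function argument: the Cauchy bounds on $\{dg_j\}$ over a fixed compact neighborhood of $p$, together with the uniform Jacobian lower bound at $p$, yield a neighborhood $V$ of $p$ and a radius $r > 0$ (both independent of $j$) with $B(g(p), r) \subset g_j(V) \subset \Omega_j$ for all sufficiently large $j$. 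Condition (1) of Definition 2.5 applied to $\overline{B(g(p), r/2)}$ then forces $B(g(p), r/2) \subset \widehat\Omega$, so in particular $g(p) \in \widehat\Omega$.

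With $g(p) \in \widehat\Omega$ in hand, I set $E := g^{-1}(\widehat\Omega)$, which is open in $\Omega'$ and contains $p$. By connectedness of the domain $\Omega'$, it suffices to show $E$ is closed. Arguing by contradiction, if $z_* \in \overline E \cap \Omega' \setminus E$, continuity of $g$ forces $g(z_*) \in \partial\widehat\Omega$. Pseudoconvexity of $\widehat\Omega$ supplies a continuous plurisubharmonic exhaustion $\varphi : \widehat\Omega \to [0,+\infty)$, so $\varphi(g(z_n)) \to +\infty$ along any sequence $z_n \to z_*$ with $z_n \in E$. On the other hand, for every $c > 0$ the compact sublevel $\{\varphi \le c\}$ lies in $\Omega_j$ for $j$ large (by condition (2) of Definition 2.5), so on the open set $g_j^{-1}(\{\varphi < c\})$ the composition $\varphi \circ g_j$ is plurisubharmonic and bounded by $c$. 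Comparing these two observations on a small closed polydisc around $z_*$ and passing to the uniform limit $g_j \to g$ via the maximum principle should contradict the divergence $\varphi(g(z_n)) \to +\infty$.

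The last step is the main obstacle: the open set on which $\varphi \circ g_j$ is defined varies with $j$, and coordinating the maximum principle with the normal convergence requires care. I expect that a rigorous execution will closely parallel the proof of the analogous statement for maps into a fixed pseudoconvex target given in Section 9.2.2 of \cite{gkk}, from which Definition 2.5 is itself a modification.
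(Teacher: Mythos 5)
The paper does not actually prove this proposition: it is presented as a modification of material in Section 9.2.2 of \cite{gkk} and used as a black box, so there is no internal argument to compare yours against. Your part (1) is complete and correct, and your treatment of the anchor point in part (2) --- the quantitative inverse function theorem giving a uniform radius $r$ with $B(g(p),r)\subset g_j(V)\subset\Omega_j$ for large $j$, followed by condition (1) of Definition 2.5 --- is exactly the role the hypothesis $|\det(dg_j|_p)|>c$ is meant to play; that part is sound.

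The gap is the one you flag yourself, and it is genuine rather than a matter of mere ``care.'' The closedness of $E=g^{-1}(\widehat\Omega)$ does not follow from a maximum-principle comparison on a closed polydisc $\Delta$ around $z_*$ as described. Two things break. First, $\varphi\circ g_j$ is defined only on $g_j^{-1}(\widehat\Omega)$, which you do not control and which need not contain $\Delta$. Second, and more fundamentally, your observation that $\varphi\circ g_j$ is plurisubharmonic and bounded by $c$ on $g_j^{-1}(\{\varphi<c\})$ is essentially tautological: the maximum principle would bound $\varphi\circ g_j$ on $\Delta$ by its values on $\partial\Delta$, but you have no information that $\partial\Delta$ lies in $E$ or that $\varphi\circ g$ is bounded there --- you only know finiteness at the points $z_n$ approaching $z_*$ from inside $E$. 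The standard repair is to fix $c>\varphi(g(z_{n_0}))$ for one such point, pass to the connected component $W_j$ of $g_j^{-1}(\{\varphi<c\})\cap\Delta$ containing $z_{n_0}$, note that $\varphi\circ g_j\equiv c$ on $\partial W_j\cap\Delta$ while $g_j(\overline{W_j})$ stays in the compact set $\{\varphi\le c\}\subset\Omega_j$ (by condition (2) of Definition 2.5), and then run a maximum-principle/Kontinuit\"atssatz argument on these components to force $z_*\in W_j$ for all large $j$, whence $g(z_*)=\lim g_j(z_*)\in\{\varphi\le c\}\subset\widehat\Omega$. Until that component argument (or an equivalent device) is written out, part (2) is not proved: as it stands you have established $g(p)\in\widehat\Omega$ and reduced the remainder to precisely the step you defer to \cite{gkk}.
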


Recall that $U'$ satisfying the condition (\ref{U'}), and $\Lambda_j := D_j \circ \Psi_j^{\Omega'} \circ \Psi_{\widehat{p}}^{\Omega}$. In this sense, $D_j (U')$ converges normally to $\mathbb{C}^2$. Hence the convergence $\widetilde{\rho_j} \rightarrow \widehat{\rho}$ guarantees that a sequence of domains $\{ \Lambda_j(\Omega) \}$ converges normally to $\widehat{\Omega}$. We now have constructed the sequence $\{ \Lambda_j \circ \phi_j : \Omega \rightarrow \Lambda_j(\Omega) \}$ for the proof of Theorem 2.2. Denote by $\sigma_j := \Lambda_j \circ \phi_j$.
\bigskip

\subsection{Convergence of the Pinchuk scaling sequence $\{ \sigma_j \}$} Since the limit domain $\widehat{\Omega}$ is unbounded, the convergence of $\{ \sigma_j \}$ does not follow immediately. So Pinchuk takes first the inverse sequence $\{ \sigma_j^{-1} : \sigma_j(\Omega) \rightarrow \Omega \}$. Recall that $\sigma_j(\Omega) = \Lambda_j(\Omega)$ and that the sequence $\{\sigma_j(\Omega)\}$ converges normally to $\widehat{\Omega}$. So Proposition 2.6 and Montel's theorem guarantee that there is a subsequence converging to a holomorphic map $\widehat{\tau} : \widehat{\Omega} \rightarrow \overline{\Omega}$ uniformly on compact subsets of $\widehat{\Omega}$. We shall keep the notation $\{ \sigma_j^{-1} \}$ for this subsequence. Actually, the image of $\widehat{\tau}$ is contained in $\Omega$. Indeed, suppose that there is a point in $\widehat{\Omega}$ whose image by $\widehat{\tau}$ is in $\partial \Omega$. Then $\widehat{\tau}(\widehat{\Omega}) \subset \partial\Omega$ by the pseudoconvexity of $\Omega$. This is impossible because $\widehat{\tau}(-1, 0) = p \in \Omega$. Consequently $\widehat{\tau}(\widehat{\Omega}) \subset \Omega$.

\begin{lemma}
There is a point $z_0 \in \widehat{\Omega}$ such that $d \widehat{\tau} |_{z_0}$ is nonsingular.
\end{lemma}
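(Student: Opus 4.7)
The natural candidate is $z_0 := (-1, 0)$, which satisfies $\widehat{\rho}(z_0) = -1 + \widehat{P}(0,0) = -1 < 0$ and so lies in $\widehat{\Omega}$ (noting $\widehat{P}(0,0) = 0$ since $\widehat{P}$ has no harmonic terms). The plan is first to identify $\widehat{\tau}(z_0) = p$, then to control $|\det(d\sigma_j|_p)|$ uniformly via Kobayashi-metric equivariance, and finally to transfer this bound to $d\widehat{\tau}|_{z_0}$ by Cauchy estimates.

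For the identification: by construction $p_j = q_j - (\epsilon_j, 0)$, and $\Psi_j^{\Omega'}(q_j) = 0$ with $\Psi_j^{\Omega'}$ tending to the identity uniformly on compacta, by the continuity of $\Psi$ in Proposition 2.3. Hence $\Psi_j^{\Omega'}(p_j) = -(\epsilon_j, 0) + o(\epsilon_j)$. As noted in the proof of Lemma 2.4, the homogeneity of $P$ of degree $2k$ forces $\delta_j^{2k} \asymp \epsilon_j$, so $\epsilon_j/\delta_j \to 0$. Applying $D_j = \mathrm{diag}(1/\epsilon_j,\, 1/\delta_j)$ yields $\sigma_j(p) = \bigl(-1 + o(1),\, o(\epsilon_j)/\delta_j\bigr) \to z_0$. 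The local uniform convergence $\sigma_j^{-1} \to \widehat{\tau}$ then gives $\widehat{\tau}(z_0) = \lim_j \sigma_j^{-1}(\sigma_j(p)) = p$.

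For the Jacobian bound: since $\sigma_j : \Omega \to \sigma_j(\Omega)$ is biholomorphic, the infinitesimal Kobayashi metric is equivariant,
\[
F^K_{\Omega}(p;\, v) \;=\; F^K_{\sigma_j(\Omega)}\bigl(\sigma_j(p);\, d\sigma_j|_p\, v\bigr), \qquad v \in \mathbb{C}^2.
\]
Because $\Omega$ is bounded, $F^K_{\Omega}(p; \cdot)$ is comparable to the Euclidean norm. By Lemma 2.4, $\widehat{P}$ is subharmonic, nonzero and has no harmonic terms, hence not harmonic; thus the model $\widehat{\Omega}$ is Kobayashi hyperbolic, and $F^K_{\widehat{\Omega}}(z_0; \cdot)$ is likewise comparable to the Euclidean norm. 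The normal convergence $\sigma_j(\Omega) \to \widehat{\Omega}$ with $\sigma_j(p) \to z_0$ in the interior, combined with the standard semi-continuity of the Kobayashi metric for pseudoconvex domains, gives $F^K_{\sigma_j(\Omega)}(\sigma_j(p); \cdot) \to F^K_{\widehat{\Omega}}(z_0; \cdot)$. Together with equivariance, this forces the singular values of $d\sigma_j|_p$ to be uniformly bounded away from $0$ and $\infty$, so $|\det(d\sigma_j|_p)| \asymp 1$.

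A closed polydisc about $z_0$ sits inside $\sigma_j(\Omega)$ for all large $j$ by normal convergence, and Cauchy's integral formula applied on it gives $d\sigma_j^{-1}|_{\sigma_j(p)} \to d\widehat{\tau}|_{z_0}$ (using $\sigma_j(p) \to z_0$ together with continuity of $d\widehat{\tau}$). Taking determinants,
\[
|\det(d\widehat{\tau}|_{z_0})| \;=\; \lim_{j \to \infty} \frac{1}{|\det(d\sigma_j|_p)|} \;>\; 0,
\]
so $d\widehat{\tau}|_{z_0}$ is nonsingular. The hard part is the Kobayashi-metric control in the preceding paragraph: it rests on hyperbolicity of the model $\widehat{\Omega}$ (coming from $\widehat{P}$ being subharmonic but not harmonic) and on the semi-continuity of the Kobayashi metric under normal convergence of pseudoconvex domains. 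Everything else is a direct computation that follows the concrete scaling construction.
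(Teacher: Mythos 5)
Your reduction is sound up to one step, but that step is the whole lemma. From the equivariance $F^K_{\Omega}(p;v)=F^K_{\sigma_j(\Omega)}(\sigma_j(p);d\sigma_j|_p v)$ and the bound $F^K_{\Omega}(p;\cdot)\le C|\cdot|$ (valid since $\Omega$ contains a ball about $p$), the conclusion $|d\sigma_j|_p v|\le C'|v|$ --- which is what you need so that $(d\sigma_j|_p)^{-1}\to d\widehat{\tau}|_{z_0}$ stays nonsingular --- requires a \emph{uniform lower bound} $F^K_{\sigma_j(\Omega)}(\sigma_j(p);w)\ge c|w|$ with $c>0$ independent of $j$. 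You derive this from ``the standard semi-continuity of the Kobayashi metric for pseudoconvex domains'' under normal convergence. That is not a standard result in this setting: normal set convergence gives only the easy inequality $\limsup_j F^K_{\sigma_j(\Omega)}\le F^K_{\widehat{\Omega}}$ (any disc into $\widehat{\Omega}$ eventually maps a slightly shrunk disc into $\sigma_j(\Omega)$), which bounds the metric from \emph{above} and hence $|d\sigma_j|_p v|$ from \emph{below} --- the wrong direction. The lower semicontinuity $\liminf_j F^K_{\sigma_j(\Omega)}\ge c|\cdot|$ is exactly the delicate point (the paper flags in the introduction that hyperbolicity of the unbounded limit domain is not a priori clear), and it does not follow from hyperbolicity of $\widehat{\Omega}$ alone, since the domains $\sigma_j(\Omega)$ are not contained in $\widehat{\Omega}$ and normal convergence places no uniform geometric constraint on them away from compact sets. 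Even your sub-claim that $\widehat{\Omega}=\{\mathrm{Re}\,w+\widehat{P}<0\}$ is hyperbolic, while true, is itself usually proved by the very plurisubharmonic-function machinery you are trying to avoid.

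The paper closes this gap by replacing the Kobayashi metric with Sibony's metric $F_S$, for which uniform lower bounds can be manufactured explicitly: the Diederich--Forn{\ae}ss theorem supplies a bounded strictly plurisubharmonic exhaustion $-(-\beta)^{\eta}$ of the bounded domain $\Psi_{\widehat{p}}^{\Omega}(\Omega)$; transporting it by the scaling maps gives functions $\widetilde{\beta}_j$ on $\Lambda_j(\Omega)$ converging to $-(-\widehat{\rho})^{\eta}$, which is strictly plurisubharmonic at some point $q_0\in\widehat{\Omega}$; Sibony's Theorem 2.11 then yields $F_S(q_0,\xi;\Lambda_j(\Omega))\ge\epsilon|\xi|$ uniformly in large $j$, and invariance plus boundedness of $\Omega$ gives $|d\widehat{\tau}|_{q_0}\xi|\gtrsim|\xi|$. (Note the paper's point is this $q_0$, not necessarily $(-1,0)$; your identification $\widehat{\tau}(-1,0)=p$ and the final Cauchy-estimate step are fine, but they are the easy parts.) To repair your argument you would need to either import this Sibony-metric estimate or prove the uniform lower bound on $F^K_{\sigma_j(\Omega)}$ by some equivalent localization argument; as written, the proposal assumes the crux rather than proving it.
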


The proof follows the arguments of Lemma 2 in \cite{bp2}. We shall treat this in the last part of this section.
\medskip

Assume Lemma 2.7. Then the convergence of the inverse sequence $\{ \sigma_j^{-1} \}$ guarantees the uniform convergence of the sequence $\{ \det({d\sigma_j^{-1} |_{\mathbf{z}}}) \}$ on compact subsets of $\widehat{\Omega}$ by Cauchy estimates. Actually, it converges to $\det(d\widehat{\tau}|_{\mathbf{z}})$ uniformly on compact subsets of $\widehat{\Omega}$. Notice that $\det({d\sigma_j^{-1}) |_{\mathbf{z}}}$ is nowhere vanishing for any $j$ since each $\sigma_j^{-1}$ is a biholomorphic map. Hurwitz's theorem implies that $\det(d\widehat{\tau}|_{\mathbf{z}})$ is nowhere vanishing and hence $d\widehat{\tau}|_{\mathbf{z}}$ is nonsingular for all $\mathbf{z} \in \widehat{\Omega}$. In particular, $\widehat{\tau}$ is an immersion.

Suppose that $\widehat{\tau}$ is not 1-1. Then there are distinct points $s,s' \in \widehat{\Omega}$ satisfying $\widehat{\tau} (s) = \widehat{\tau} (s')$. Choose a neighborhood $U \Subset \widehat{\tau}(\widehat{\Omega})$ of $\widehat{\tau} (s)$ so that $\widehat{\tau}^{-1} (U)$ is disconnected. Let $V_{s}$ and $V_{s'}$ be mutually disjoint, connected components of $\widehat{\tau}^{-1} (U)$ such that $s \in V_{s}$ and $s' \in V_{s'}$. Note that $U$ can be adjusted so that $V_{s}$ and $V_{s'}$ are relatively compact in $\widehat{\Omega}$. Consequently, $\{ \sigma_j^{-1} |_{V_{s}} \}$ and $\{ \sigma_j^{-1} |_{V_{s'}} \}$ converge uniformly to $\widehat{\tau} |_{V_{s}}$ and $\widehat{\tau} |_{V_{s'}}$ respectively. Note that $\widehat{\tau}(V_{s}) =  \widehat{\tau}(V_{s'}) = U$, which implies that $\sigma_j^{-1} (V_{s}) \cap \sigma_j^{-1} (V_{s'}) \neq \emptyset$ for sufficiently large $j$. This contradicts the injectivity of $\sigma_j^{-1}$. This implies

\begin{prop}
$\widehat{\tau} : \widehat{\Omega} \rightarrow \Omega$ is 1-1.
\end{prop}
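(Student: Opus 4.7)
My plan is to argue by contradiction, leveraging the fact that each $\sigma_j^{-1}$ is biholomorphic together with the locally uniform convergence $\sigma_j^{-1} \to \widehat{\tau}$ already established by Proposition 2.6 and Lemma 2.7.

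First I would upgrade Lemma 2.7 to the statement that $d\widehat{\tau}|_{\mathbf{z}}$ is nonsingular \emph{everywhere} on $\widehat{\Omega}$. Cauchy estimates promote the compact convergence $\sigma_j^{-1}\to\widehat{\tau}$ to compact convergence of the Jacobian determinants $\det(d\sigma_j^{-1})\to\det(d\widehat{\tau})$. Each $\det(d\sigma_j^{-1})$ is nowhere zero on $\widehat{\Omega}$ (where it is defined, which, after passing to a subsequence, covers any given compact set once $j$ is large by the normal set-convergence $\sigma_j(\Omega)\to\widehat{\Omega}$), and Lemma 2.7 supplies a single point at which the limit does not vanish. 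Hurwitz's theorem then forces $\det(d\widehat{\tau})$ to be nowhere vanishing, so $\widehat{\tau}$ is at least a holomorphic immersion.

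Next I would suppose for contradiction that $\widehat{\tau}(s)=\widehat{\tau}(s')$ for some $s\neq s'$ in $\widehat{\Omega}$. Since $\widehat{\tau}$ is an immersion, its image is open near $\widehat{\tau}(s)$, and I can choose a small open set $U$ inside $\widehat{\tau}(\widehat{\Omega})$ whose preimage $\widehat{\tau}^{-1}(U)$ is not connected near $s,s'$. Shrinking if necessary, let $V_s,V_{s'}$ be the distinct connected components of $\widehat{\tau}^{-1}(U)$ containing $s$ and $s'$ respectively, chosen so that both are relatively compact in $\widehat{\Omega}$ and $\widehat{\tau}(V_s)=\widehat{\tau}(V_{s'})=U$ (one can take $U$ to be a small ball inside $\widehat{\tau}(\widehat{\Omega})$ around $\widehat{\tau}(s)$ and then invoke the inverse function theorem applied to $\widehat{\tau}$ at $s$ and $s'$).

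The final step is to promote this overlap of images to an overlap of the pre-images under $\sigma_j^{-1}$ and thereby contradict its injectivity. Because $V_s\cup V_{s'}$ is compactly contained in $\widehat{\Omega}$, the restrictions $\sigma_j^{-1}|_{V_s}$ and $\sigma_j^{-1}|_{V_{s'}}$ converge uniformly to $\widehat{\tau}|_{V_s}$ and $\widehat{\tau}|_{V_{s'}}$; both limits have the same image $U\Subset\Omega$. A standard Rouch\'e/open-map argument then shows that for all sufficiently large $j$, $\sigma_j^{-1}(V_s)$ and $\sigma_j^{-1}(V_{s'})$ both contain some interior point common to a fixed small ball about a point of $U$, so $\sigma_j^{-1}(V_s)\cap\sigma_j^{-1}(V_{s'})\neq\emptyset$, contradicting the injectivity of $\sigma_j^{-1}$ (note $V_s\cap V_{s'}=\emptyset$). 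I expect the main obstacle to be the careful verification that the components $V_s,V_{s'}$ can be chosen relatively compact in $\widehat{\Omega}$ and that the Hurwitz-style open-mapping argument in the last step produces an actual common value of $\sigma_j^{-1}$ rather than merely closeness of images; both issues are handled using the immersion property and uniform convergence on relatively compact sets.
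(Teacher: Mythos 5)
Your proposal is correct and follows essentially the same route as the paper: Hurwitz's theorem applied to the Jacobian determinants $\det(d\sigma_j^{-1})$ upgrades Lemma 2.7 to the statement that $\widehat{\tau}$ is an immersion everywhere, and then the assumption $\widehat{\tau}(s)=\widehat{\tau}(s')$ with $s\neq s'$ is contradicted by choosing disjoint, relatively compact components $V_s, V_{s'}$ of $\widehat{\tau}^{-1}(U)$ and using uniform convergence to force $\sigma_j^{-1}(V_s)\cap\sigma_j^{-1}(V_{s'})\neq\emptyset$, violating injectivity of $\sigma_j^{-1}$. If anything, your explicit flagging of the Rouch\'e/open-mapping step needed to convert ``both images converge to $U$'' into an actual common value of $\sigma_j^{-1}$ is more careful than the paper, which asserts that implication without comment.
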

\medskip

Now we are concerned with the surjectivity of $\widehat{\tau}$.

\begin{lemma}
The sequence $\{ \sigma_j \}$ converges to $\widehat{\tau}^{-1}$ uniformly on compact subsets of $\widehat{\tau}(\widehat{\Omega})$.
\end{lemma}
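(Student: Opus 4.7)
The plan is to prove the convergence $\sigma_j \to \widehat{\tau}^{-1}$ first locally around each point of $\widehat{\tau}(\widehat{\Omega})$, and then extend to arbitrary compact subsets by a finite covering argument. The key ingredient is the nonsingularity of $d\widehat{\tau}|_{\mathbf{z}}$ at every $\mathbf{z} \in \widehat{\Omega}$, established in the paragraph preceding Proposition 2.8, so that $\widehat{\tau} : \widehat{\Omega} \to \widehat{\tau}(\widehat{\Omega})$ is a biholomorphism onto its image; this allows a quantitative inverse function theorem to be applied to the sequence $\{\sigma_j^{-1}\}$ uniformly in $j$.

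I would fix $w_0 \in \widehat{\Omega}$ and set $z_0 := \widehat{\tau}(w_0) \in \widehat{\tau}(\widehat{\Omega}) \subset \Omega$, and choose a closed polydisc $\overline{P} \Subset \widehat{\Omega}$ centered at $w_0$ on which $\widehat{\tau}$ restricts to a biholomorphism. Since $\sigma_j^{-1} \to \widehat{\tau}$ uniformly on $\overline{P}$ by hypothesis, Cauchy estimates give $d\sigma_j^{-1} \to d\widehat{\tau}$ uniformly on $\overline{P}$ as well. Because $d\widehat{\tau}$ is uniformly invertible on $\overline{P}$, a quantitative inverse function theorem then shows that for all sufficiently large $j$ the map $\sigma_j^{-1}$ is a biholomorphism from some smaller polydisc $P' \subset P$ onto an open set containing a fixed polydisc $P''$ around $z_0$, where the size of $P''$ is independent of $j$.

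It follows that $\sigma_j(P'') \subset P'$ for every sufficiently large $j$, so $\{\sigma_j|_{P''}\}$ is locally uniformly bounded and hence forms a normal family by Montel's theorem. If $g$ is any subsequential limit of $\sigma_{j_k}|_{P''}$, then $g(P'') \subset \overline{P'} \subset \widehat{\Omega}$, and passing to the limit in the identity $\sigma_{j_k}^{-1}(\sigma_{j_k}(z)) = z$, while using the uniform convergence $\sigma_j^{-1} \to \widehat{\tau}$ on $\overline{P'}$, yields $\widehat{\tau}(g(z)) = z$ for every $z \in P''$. Hence $g = \widehat{\tau}^{-1}$ on $P''$; since every subsequential limit agrees with $\widehat{\tau}^{-1}$, the full sequence $\sigma_j$ converges to $\widehat{\tau}^{-1}$ uniformly on $P''$. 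A standard finite-covering argument then upgrades this local statement to uniform convergence on any compact subset of $\widehat{\tau}(\widehat{\Omega})$.

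The main obstacle is the second step, namely converting the uniform convergence of $\{\sigma_j^{-1}\}$ on a neighborhood of $w_0$ into local boundedness of $\{\sigma_j\}$ on a fixed neighborhood of $z_0$. Once the quantitative inverse function theorem is used to produce the polydisc $P''$ of $j$-independent size, the rest is a routine application of Montel's theorem and uniqueness of subsequential limits.
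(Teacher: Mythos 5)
Your proposal is correct and follows essentially the same route as the paper: both arguments convert the uniform convergence of $\{\sigma_j^{-1}\}$ together with the nonsingularity of $d\widehat{\tau}$ into local boundedness of $\{\sigma_j\}$ via the inverse function theorem, then apply Montel's theorem and identify the limit by passing to the limit in $\sigma_j^{-1}\circ\sigma_j=\mathrm{id}$. The only real difference is cosmetic: the paper bounds $d\sigma_j$ on a fixed compact set $K\Subset K'$ using Cramer's rule applied to $d\sigma_j^{-1}$ and anchors the family with the normalization $\sigma_j(p)=(-1,0)$, whereas you obtain boundedness of the maps directly from the containment $\sigma_j(P'')\subset P'$ supplied by a quantitative inverse function theorem and then patch the local statements together by a finite cover.
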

\begin{proof}
Fix a compact set $K$ in $\widehat{\tau}(\widehat{\Omega})$ and take a compact set $K'$ such that $K \Subset K' \Subset \widehat{\tau}(\widehat{\Omega})$. Since $\widehat{\tau}^{-1}(K')$ is also compact in $\widehat{\Omega}$, $\sigma_j(\Omega)$ contains $\widehat{\tau}^{-1}(K')$ for all sufficiently large $j$. Hence the uniform convergence of the inverse scaling sequence $\{ \sigma_j^{-1} \}$ on $\widehat{\tau}^{-1}(K')$ is well-defined. So the sequence of differentials $\{ d\sigma_j^{-1} \}$ converges to $d\widehat{\tau}$ uniformly on $\widehat{\tau}^{-1}(K')$. Moreover there is $c > 1$ and $J > 0$ such that $\frac{1}{c} < |\det{(d\sigma_j^{-1})}| < c$ on $\widehat{\tau}^{-1}(K')$ for all $j > J$, since the limit map $\widehat{\tau}$ is 1-1.

Crammer's rule in Linear algebra says that
$$
(d\sigma_j^{-1})^{-1} = \frac{1}{\det({d\sigma_j^{-1}})}C_j^T
$$
where $C_j^T$ is the transpose of the cofactor matrix of $d\sigma_j^{-1}$.

Since $|\det{(d\sigma_j^{-1})}|$ has a uniformly positive lower bound on $\widehat{\tau}^{-1}(K')$, each entry of the sequence of matrices $\{ (d\sigma_j^{-1})^{-1} \}$ is uniformly bounded on $\widehat{\tau}^{-1}(K')$. Notice that $\sigma_j^{-1} \circ \widehat{\tau}^{-1}$ converges to identity uniformly on $K'$. Hence one can choose $J' > 0$ so that $K \Subset \sigma_j^{-1}(\widehat{\tau}^{-1}(K'))$ if $j > J'$. Consequently the sequence $\{ d\sigma_j \mid j > J' \}$ is uniformly bounded on $K$ by the inverse function theorem. Recall that $\sigma_j (p) = (-1, 0)$ for all $j$. Hence Montel's theorem implies that the sequence $\{ \sigma_j \}$ has a subsequence that converges to some holomorphic function $g$ uniformly on compact subsets of the interior of $K$. we shall keep the notation $\{ \sigma_j \}$ also for this convergent subsequence. Then the sequence $\{ \sigma_j^{-1} \circ \sigma_j \}$ of the identity maps converges to $\widehat{\tau} \circ g$ on $K$. Hence $g = \widehat{\tau}^{-1}$ on $K$. Since $K$ is arbitrarily chosen, $g = \widehat{\tau}^{-1}$ on all of $\widehat{\tau}(\widehat{\Omega})$ and this proves the lemma.
\end{proof}
\medskip

\begin{prop}
$\widehat{\tau} : \widehat{\Omega} \rightarrow \Omega$ is onto.
\end{prop}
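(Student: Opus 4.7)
The plan is to construct a holomorphic map $\widehat{\sigma}\colon\Omega\to\widehat{\Omega}$ satisfying $\widehat{\tau}\circ\widehat{\sigma}=\operatorname{id}_{\Omega}$, from which surjectivity of $\widehat{\tau}$ is immediate. I would produce $\widehat{\sigma}$ as a subsequential limit of the scaling sequence $\{\sigma_j\}$ itself (not of its inverses), identify it with $\widehat{\tau}^{-1}$ on $\widehat{\tau}(\widehat{\Omega})$ using Lemma 2.9, and continue the identity to all of $\Omega$ analytically.

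The first step is to show that $\{\sigma_j\}$ is a normal family on $\Omega$. Fix a compact set $K\subset\Omega$ and set $R=\sup_{q\in K}d^{\Omega}_K(p,q)$; this is finite since $\Omega$ is bounded and hence Kobayashi hyperbolic. Because each $\sigma_j\colon\Omega\to\Lambda_j(\Omega)$ is a biholomorphism with $\sigma_j(p)=(-1,0)$, it is a Kobayashi isometry, so $\sigma_j(K)$ is contained in the Kobayashi $R$-ball of $\Lambda_j(\Omega)$ centered at $(-1,0)$. The limit domain $\widehat{\Omega}$ is Kobayashi hyperbolic because $\widehat{P}$ is a subharmonic polynomial with no harmonic terms, and since $\Lambda_j(\Omega)$ converges normally to $\widehat{\Omega}$, the standard stability of the Kobayashi pseudometric under normal convergence to a hyperbolic domain shows that these Kobayashi balls are uniformly contained in a fixed compact subset of $\mathbb{C}^2$ for all large $j$. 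Montel's theorem now yields a subsequence, still denoted $\{\sigma_j\}$, that converges uniformly on compact subsets of $\Omega$ to a holomorphic map $\widehat{\sigma}\colon\Omega\to\mathbb{C}^2$.

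Next I would apply Proposition 2.6(2) with $\Omega'=\Omega$ and $g_j=\sigma_j$ to place the target inside $\widehat{\Omega}$. Pseudoconvexity of $\widehat{\Omega}$ follows from subharmonicity of $\widehat{P}$, and the required determinant lower bound holds because $\sigma_j^{-1}\to\widehat{\tau}$ uniformly near $(-1,0)\in\widehat{\Omega}$ while $d\widehat{\tau}$ is nowhere singular on $\widehat{\Omega}$ (as established after Lemma 2.7 via Hurwitz); indeed $|\det d\sigma_j|_p|=|\det d\sigma_j^{-1}|_{(-1,0)}|^{-1}$ tends to $|\det d\widehat{\tau}|_{(-1,0)}|^{-1}>0$. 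Hence $\widehat{\sigma}(\Omega)\subset\widehat{\Omega}$. Lemma 2.9 then forces $\widehat{\sigma}=\widehat{\tau}^{-1}$ on the open set $\widehat{\tau}(\widehat{\Omega})$, so $\widehat{\tau}\circ\widehat{\sigma}=\operatorname{id}_{\Omega}$ there; the identity principle on the connected domain $\Omega$ propagates this equality to all of $\Omega$, and surjectivity of $\widehat{\tau}$ follows.

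The main obstacle is the uniform boundedness of $\{\sigma_j(K)\}$ in Step 1: one needs to rule out that some $\sigma_j(q)$, particularly for $q\in\Omega\setminus\widehat{\tau}(\widehat{\Omega})$, escapes to infinity in $\mathbb{C}^2$. This is where Kobayashi hyperbolicity of the limit domain $\widehat{\Omega}$ and the Kobayashi-isometry property of the biholomorphisms $\sigma_j$ play the decisive role. Once this uniform compactness on compacta is secured, the remainder of the argument is purely analytic continuation via Lemma 2.9 and the identity principle.
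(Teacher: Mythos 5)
There is a genuine gap in your Step 1, and it sits exactly at the central difficulty that the whole Pinchuk method is designed to circumvent. You claim that the Kobayashi $R$-balls of $\Lambda_j(\Omega)$ centered at $(-1,0)$ are ``uniformly contained in a fixed compact subset of $\mathbb{C}^2$'' by ``standard stability of the Kobayashi pseudometric under normal convergence to a hyperbolic domain.'' No such standard result applies here. Normal convergence $\Lambda_j(\Omega)\to\widehat{\Omega}$ only controls the domains on compact subsets of $\mathbb{C}^2$; it says nothing about the geometry of $\Lambda_j(\Omega)$ far from the origin, where these domains (images of $\Omega$ under enormous stretchings $D_j$) can and do reach out to infinity. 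To conclude that $\sigma_j(K)$ stays in a fixed compact set you would need a lower bound for $k_{\Lambda_j(\Omega)}((-1,0),\cdot)$ that is \emph{uniform in $j$} and proper at infinity; hyperbolicity of the limit domain $\widehat{\Omega}$ alone (itself nontrivial --- the introduction of the paper explicitly flags that hyperbolicity of $\widehat{\Omega}$ is not a priori clear) does not transfer to such a uniform estimate on the approximating domains. Establishing it requires a genuine localization/attraction argument (plurisubharmonic peak functions at $\widehat{p}$ controlling the rate at which $\phi_j(K)$ approaches $\widehat{p}$ relative to $\epsilon_j,\delta_j$), which is precisely the hard analytic content of Berteloot--C\oe ur\'e type arguments. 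Note also that if your Step 1 were available, Lemma 2.9 --- which painstakingly obtains convergence of $\{\sigma_j\}$ only on compact subsets of $\widehat{\tau}(\widehat{\Omega})$ --- would be superfluous; that the paper proves convergence only there is a signal that normality of $\{\sigma_j\}$ on all of $\Omega$ is not accessible at this stage. The rest of your argument (Proposition 2.6(2), the determinant bound via $|\det d\sigma_j|_p|=|\det d\sigma_j^{-1}|_{(-1,0)}|^{-1}$, and the identity-principle step) is fine, but it all rests on the unproved normality.

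For comparison, the paper's proof takes a completely different and more economical route: assuming $\widehat{\tau}(\widehat{\Omega})\subsetneq\Omega$, it picks $p'\in\partial(\widehat{\tau}(\widehat{\Omega}))\cap\Omega$, runs a second scaling sequence $\{\sigma_j'\}$ based at $p'$ (using that $\widehat{p}$ is a peak point, so $\phi_j(p')\to\widehat{p}$ as well), and studies the transition maps $B_j=\sigma_j\circ{\sigma_j'}^{-1}$. The key structural fact is that each $B_j$ is a polynomial automorphism of $\mathbb{C}^2$ of degree at most $2k$, so convergence on one small open set (supplied by Lemma 2.9) upgrades automatically to convergence on all of $\mathbb{C}^2$, yielding an automorphism $\widehat{B}$ with $\widehat{\tau}\circ\widehat{B}(-1,0)=p'$ and hence the contradiction $p'\in\widehat{\tau}(\widehat{\Omega})$. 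This exploits the algebraic rigidity of the stretching maps instead of any uniform metric estimate, which is exactly what your approach is missing.
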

\begin{proof}
Suppose that $\widehat{\tau}(\widehat{\Omega}) \subsetneq \Omega$. Choose a point $p' \in \partial (\widehat{\tau}(\widehat{\Omega})) \cap \Omega$. Since $\widehat{p} = \lim_{j \to \infty} \phi_j(p)$ is a peak point of $\Omega$, the sequence $\{ \phi_j(p') \}$ also converges to $\widehat{p}$. Now we construct the Pinchuk scaling sequence $\{ \sigma'_j := \Lambda'_j \circ \phi_j \}$ with respect to $\{ \phi_j(p') \}$ as in Section 2.2. We already observed, in Proposition 2.8, that the inverse scaling sequence $\{ {\sigma'}_j^{-1} \}$ has a subsequence converging to a 1-1 holomorphic map $\widehat{\tau'} : \widehat{\Omega}' \rightarrow \Omega$ uniformly on compact subsets of $\widehat{\Omega}'$, where $\widehat{\Omega}'$ is the limit domain of the sequence $\{ \sigma'_j (\Omega) \}$. Taking a subsequence if necessary, we may assume that the uniform convergence holds for $\sigma_j^{-1} \rightarrow \widehat{\tau}$ and ${\sigma'}_j^{-1} \rightarrow \widehat{\tau'}$ on compact subsets of $\widehat{\Omega}$ and $\widehat{\Omega}'$ respectively.

Denote by $W := \widehat{\tau}(\widehat{\Omega}) \cap \widehat{\tau'}(\widehat{\Omega}')$ and set $B_j := \sigma_j \circ {\sigma'}_j^{-1} : {\sigma'}_j(\Omega) \rightarrow \sigma_j(\Omega)$. Notice that the maps $B_j$ and $B_j^{-1}$ are polynomial automorphisms (in fact, essentially triangular) of $\mathbb{C}^2$ with degree less than or equal to $2k$. Fix a nonempty open set $W' \Subset W$. By Lemma 2.9, $\sigma_j|_{W'}$ converges uniformly to $\widehat{\tau}^{-1}|_{W'}$ and hence $\lim_{j \to \infty} B_j|_{\widehat{\tau'}^{-1}(W')} \equiv \widehat{\tau}^{-1} \circ \widehat{\tau'}|_{\widehat{\tau'}^{-1}(W')}$, a biholomorphism of ${\widehat{\tau'}^{-1}(W')}$ and $\widehat{\tau}(W')$. Since each $B_j$ is a polynomial automorphism of $\mathbb{C}^2$ and of degree less than or equal to $2k$, it converges uniformly on compact subsets of $\mathbb{C}^2$ to a polynomial map $\widehat{B}$ of degree less than or equal to $2k$. Actually the limit map $\widehat{B}$ is in $Aut(\mathbb{C}^2)$ by a similar argument as in the proof of an injectivity of $\widehat{\tau}$. Similarly, $B_j^{-1}$ converges to $\widehat{B}^{-1}$. Now Proposition 2.6 guarantees that $\widehat{B}|_{\widehat{\Omega}'} : \widehat{\Omega}' \rightarrow \widehat{\Omega}$ and $\widehat{B}^{-1}|_{\widehat{\Omega}} : \widehat{\Omega} \rightarrow \widehat{\Omega}'$ are inverse to each other and hence $\widehat{\Omega}$ and $\widehat{\Omega}'$ are biholomorphic. Notice that $\widehat{B}|_{\widehat{\Omega}'}(-1, 0) \in \widehat{\Omega}$ and $\widehat{\tau} \circ \widehat{B}|_{\widehat{\Omega}'}(-1, 0) = p'$. Therefore $p' \in \widehat{\tau}(\widehat{\Omega})$. This contradicts that $p' \in \partial (\widehat{\tau}(\widehat{\Omega})) \cap \Omega$.
\end{proof}
\medskip

\begin{rmk} \rm
The Pinchuk scaling sequence also depends on the initial point. But the argument above shows that their limit domains have to be equivalent, via a polynomial automorphism of $\mathbb{C}^2$ with its degree not more than $2k$, the type of the orbit accumulating boundary point.
\end{rmk}
\medskip

Propositions 2.8 and 2.10 imply that the limit map $\widehat{\tau} : \widehat{\Omega} \rightarrow \Omega$ of the inverse scaling sequence is a biholomorphic map. Consequently, the limit map $\widehat{\sigma}$ of the Pinchuk scaling sequence is defined on all of $\Omega$ and it satisfies $\widehat{\sigma} \equiv \widehat{\tau}^{-1}$. So the only remaining part for the proof of Theorem 2.2 is justifying Lemma 2.7.
\bigskip

{\bf Proof of Lemma 2.7.} The main idea is an estimate of the invariant metric introduced by Sibony \cite{s}. It is defined by
\begin{equation} \nonumber
F_S(p, \xi ; M) := \sup_u \left\{ \left( \sum_{i, j = 1}^{n} \frac{\partial^2 u}{\partial z_i \partial \bar{z}_j} \bigg|_p \xi_i \bar{\xi}_j \right)^{\frac{1}{2}} \colon u \in A_{M} (p) \right\},
\end{equation}
where $A_{M} (p)$ is the set of all plurisubharmonic functions of $M$ defined as follows: $u \in A_{M} (p)$ if $0 \le u \le 1$, $u(p) = 0$, $u \in C^2$ near $p$, and $\log u$ is plurisubharmonic on $M$.
\medskip

For this metric, Sibony proved;
\begin{thm}[\cite{s}]
Let $M$ be a complex manifold. If there is a bounded plurisubharmonic function $u$ of $M$ and there is a constant $\delta > 0$ such that $dd^c u$ is positive definite on a $\delta$-neighborhood of $p$, then there is an $\epsilon = \epsilon(\delta) > 0$ such that $F_S(p, \xi ; M) \ge \epsilon |\xi|$ for all $\xi$ in the holomorphic tangent space $T_p^{\mathbb{C}} M$.
\end{thm}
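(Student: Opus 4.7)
The plan is to exhibit, for each unit tangent vector $\xi \in T_p^{\mathbb{C}} M$, an admissible test function $\phi \in A_M(p)$ whose complex Hessian at $p$ satisfies $(\partial \bar \partial \phi)|_p(\xi, \bar\xi) \ge \epsilon(\delta)^2$; the stated lower bound on $F_S(p, \xi; M)$ then follows immediately from the supremum definition of the Sibony metric.

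\smallskip

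First I would set up local data. By the affine change $u \mapsto au+b$, assume $0 \le u \le 1$ on $M$ and $u(p) = 0$. Choose holomorphic coordinates $z = (z_1,\ldots,z_n)$ near $p$ centered at $p$ so that the Euclidean ball $B_\delta := \{|z|<\delta\}$ lies in the region where $\partial\bar\partial u \ge c_0 \omega_0$, for a constant $c_0 = c_0(\delta) > 0$ and the Euclidean K\"ahler form $\omega_0$. A second-order Taylor expansion of $u$ at $p$ produces a holomorphic polynomial $H(z)$ of degree $\le 2$ matching the holomorphic part of $u$ through order two; set $v(z) := u(z) - 2\operatorname{Re} H(z)$. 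Since $\operatorname{Re} H$ is pluriharmonic, $v$ is psh on $B_\delta$, $v(p) = 0$, and a standard estimate gives $v(z) \ge (c_0/2)|z|^2$ on a smaller ball $B_{r_0}$, for some $r_0 = r_0(\delta, \|u\|_{C^3(B_\delta)})$.

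\smallskip

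Next I would construct the test function. The natural local log-psh model is
$$
\psi(z) := |z|^2 \, e^{-2\operatorname{Re} H(z)},
$$
whose logarithm $\log|z|^2 - 2\operatorname{Re} H(z)$ is psh on $B_\delta$ (a psh function plus a pluriharmonic function), which vanishes at $p$, and which satisfies $(\partial\bar\partial \psi)|_p = \omega_0$. To turn $\psi$ into a globally defined log-psh function on $M$ still vanishing at $p$, one exploits the strict plurisubharmonicity of $u$ to solve $\bar\partial$ with weight $e^{-Nu}$. Let $\chi$ be a smooth cutoff equal to $1$ on $B_{r_0/2}$ and supported in $B_{r_0}$, and consider, for $j = 1,\ldots,n$, the $\bar\partial$-closed smooth $(0,1)$-forms $\alpha_j := \bar\partial(\chi \cdot z_j e^{-H(z)})$, which are supported on the annulus $\{r_0/2 \le |z| \le r_0\}$. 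H\"ormander-type $L^2$-estimates, applied on $M$ with the weight $e^{-Nu}$, yield solutions $g_j$ of $\bar\partial g_j = \alpha_j$ with $g_j(p) = 0$ and $d g_j|_p = 0$; then $f_j := \chi z_j e^{-H} - g_j$ are globally defined holomorphic functions on $M$ with $f_j(p) = 0$ and $df_j|_p$ nonsingular. The function $\phi(z) := C \sum_{j=1}^n |f_j(z)|^2$, normalized by a constant $C$ so that $\phi \le 1$, lies in $A_M(p)$ (log-psh as a sum of squared moduli of holomorphic functions), vanishes at $p$, is $C^2$ near $p$, and has $(\partial\bar\partial \phi)|_p \ge C \omega_0$, giving the required bound $F_S(p, \xi; M) \ge \epsilon(\delta) |\xi|$.

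\smallskip

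The main obstacle is the $\bar\partial$-solvability step, since classical H\"ormander $L^2$-theory requires $M$ to be pseudoconvex (or Stein), a hypothesis not directly assumed. Sibony's argument in \cite{s} handles this by using the bounded psh function $u$ itself as a weight on an exhaustion of $M$ by sublevel sets of $u$, with the strict positivity $\partial\bar\partial u \ge c_0 \omega_0$ near $p$ supplying the local curvature bound needed to control the $L^2$-norms; passage to the limit yields the desired global holomorphic functions. The constant $\epsilon(\delta)$ emerges from tracking the dependence of these $L^2$-constants on $c_0(\delta)$, $r_0(\delta)$, and $\|u\|_\infty$, and this bookkeeping is the substantive technical content of Sibony's proof.
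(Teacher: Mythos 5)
First, note that the paper does not actually prove this statement: Theorem 2.11 is quoted from Sibony's paper \cite{s} and used as a black box, so the comparison below is with the argument in that reference rather than with a proof in the present text.

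Your proposal has a genuine gap at its central step, and you have in fact located it yourself: the construction of the global holomorphic functions $f_j$ requires solving $\bar\partial g_j=\alpha_j$ on $M$, but $M$ is an arbitrary complex manifold with no pseudoconvexity, completeness, or Stein hypothesis, so H\"ormander's $L^2$ theory does not apply. The suggested repair --- exhausting $M$ by sublevel sets of the bounded psh function $u$ and using $e^{-Nu}$ as a weight --- does not work: sublevel sets of a bounded psh function need not be relatively compact, need not exhaust $M$, and need not be Stein (indeed $\{u<c\}$ can be all of $M$ for $c$ near $\sup u$), and the weight $e^{-Nu}$ is bounded above and below, so it contributes no curvature outside the small ball where $u$ is strictly psh. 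More fundamentally, a general complex manifold admitting a bounded psh function that is strictly psh near one point may carry very few global holomorphic functions, so no scheme that manufactures the test function out of global holomorphic data can prove the theorem in this generality. This is not incidental: Sibony's point is precisely that the metric $F_S$ is built from log-psh functions rather than holomorphic ones so that no $\bar\partial$-solvability is ever needed.

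The actual argument is elementary and purely local-to-global by gluing. Normalize $0\le u\le 1$, $u(p)=0$, take coordinates with $p=0$, and let $\rho(z)=|z|^2/\delta^2$ on $B_\delta$. Choose a smooth nondecreasing $\chi:[0,\infty)\to[0,1]$ with $\chi(t)=t$ for $t\le 1/4$ and $\chi(t)=1$ for $t\ge 3/4$, and set $\phi:=e^{-C}\chi(\rho)\,e^{Cu}$ on $B_\delta$, extended by $e^{-C}e^{Cu}$ on $M\setminus B_\delta$ (the two definitions agree where $\rho\ge 3/4$). Then $\log\phi=\log\chi(\rho)+Cu-C$ is psh everywhere: where $\chi(\rho)=\rho$ or $\chi(\rho)=1$ this is clear, and on the transition annulus $dd^c\log\chi(\rho)\ge -K(\delta)\,\omega_0$ is absorbed by $dd^c(Cu)\ge Cc_0\,\omega_0$ once $C\ge K(\delta)/c_0$. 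Moreover $0\le\phi\le 1$, $\phi(p)=0$, $\phi$ is $C^2$ near $p$, and since $\rho(p)=0$ and $d\rho|_p=0$ one gets $dd^c\phi|_p=e^{-C}\delta^{-2}\omega_0$, whence $F_S(p,\xi;M)\ge e^{-C/2}\delta^{-1}|\xi|$. This single cutoff-and-exponential-weight trick replaces your entire $\bar\partial$ machinery and is where the constant $\epsilon(\delta)$ really comes from.
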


Recall that $\Psi_{\widehat{p}}^{\Omega}(\Omega)$ is a bounded domain in $\mathbb{C}^2$ and its local defining function at $\mathbf{0}$ is $\rho_{\widehat{p}}$. Hence the main theorem of \cite{df} by Diederich and Forn{\ae}ss says that there is a $C^{\infty}$ defining function $\beta$ and a sufficiently small $\eta > 0$ such that $-(-\beta)^{\eta}$ is a strictly plurisubharmonic bounded exhaustion function of $\Psi_{\widehat{p}}^{\Omega}(\Omega)$. Since both $\beta$ and $\rho_{\widehat{p}}$ are defining functions of $\Psi_{\widehat{p}}^{\Omega}(\Omega)$, $\lim_{{\bf z} \rightarrow {\bf 0}} \left(\frac{\beta({\bf z})}{\rho_{\widehat{p}} ({\bf z})}\right)^{\eta} = c$ for some positive constant $c$. Taking a constant multiple of $\beta$, one may assume that $c = 1$. Define $\widetilde{\beta}_j := -\epsilon_j^{-\eta}(-\beta \circ (\Psi_j^{\Omega'})^{-1} \circ D_j^{-1})^{\eta}$. Since the equality $-\widetilde{\rho_j} = \epsilon_j^{-1} (-\rho_{\widehat{p}} \circ (\Psi_j^{\Omega'})^{-1} \circ D_j^{-1})$ holds due to the construction of $\widetilde{\rho_j}$ and $\rho_{\widehat{p}}$, we have the following formula:
\begin{equation} \nonumber
\widetilde{\beta}_j = -\left(\frac{\beta}{\rho_{\widehat{p}}} \circ (\Psi_j^{\Omega'})^{-1} \circ D_j^{-1} \right)^{\eta} \cdot (-\widetilde{\rho_j})^{\eta}.
\end{equation}
Recall that $\widetilde{\rho_j} \rightarrow \widehat{\rho}$ uniformly on compact sets of $\mathbb{C}^2$. Since $(\Psi_j^{\Omega'})^{-1} \circ D_j^{-1}$ converges to ${\bf 0}$ uniformly on compact sets of $\mathbb{C}^2$, the convergence $\widetilde{\beta}_j \rightarrow -(-\widehat{\rho})^{\eta}$ is also uniform on compact sets of $\mathbb{C}^2$. Note that the plurisubharmonicity of each $\widetilde{\beta}_j$ guarantees the plurisubharmonicity of the limit map $-(-\widehat{\rho})^{\eta}$. Recall that $\widehat{\rho} (w, z) = \textrm{Re}\,w + \widehat{P} (z, \bar{z})$ for a certain nonzero subharmonic polynomial $\widehat{P}$ without harmonic terms. Hence $e^{-(-\widehat{\rho})^{\eta}}$ is strictly plurisubharmonic almost everywhere on $\widehat{\Omega}$. In particular, there is a point $q_0 \in \widehat{\Omega}$ such that $e^{-(-\widehat{\rho})^{\eta}}$ is strictly plurisubharmonic at $q_0$. So $e^{\widetilde{\beta}_j}$ is uniformly strictly plurisubharmonic at $q_0$ for all sufficiently large $j$. Now Theorem 2.11 guarantees that there is an $\epsilon > 0$ such that $F_S(q_0, \xi; \Lambda_j(\Omega)) \ge \epsilon |\xi|$ holds whenever $\xi \in \mathbb{C}^2$ and $j$ is sufficiently large. Note that this metric is invariant under biholomorphic transformations, and hence
\begin{equation} \nonumber
F_S(\tau_j(q_0), d \tau_j |_{q_0} (\xi); \Omega) \ge \epsilon |\xi|
\end{equation}
for all $\xi \in \mathbb{C}^2$ and $j$ sufficiently large.
\medskip

Let $q' \in \Omega$ be the limit point of $\{ \tau_j(q_0) \}$. Then the uniform convergence $d \tau_j |_{q_0} \rightarrow d \widehat{\tau} |_{q_0}$ on compact subsets of $\widehat{\Omega}$ implies that $F_S(q', d \widehat{\tau} |_{q_0} (\xi); \Omega) \ge \epsilon |\xi|$ for all $\xi \in \mathbb{C}^2$. Consequently, $d \widehat{\tau}$ is nonsingular at $q_0$ and this proves Lemma 2.7.
\medskip

\section{The Frankel scaling sequence}

\begin{defn}[\cite{f}] \rm
Let $\Omega$ be a domain in $\mathbb{C}^n$ with a point $p \in \Omega$, and $\{ \phi_j \}$ be a sequence in $Aut(\Omega)$. Then the {\it Frankel scaling sequence} $\{ \omega_j : \Omega \rightarrow \mathbb{C}^n \}$ with respect to $(\Omega, p, \{\phi_j\})$ is defined by
\begin{equation}
\omega_j (\mathbf{z}) := [d\phi_j |_p]^{-1} (\phi_j(\mathbf{z}) - \phi_j (p))
\end{equation} \nonumber
where $\mathbf{z} = (z_1, \cdots, z_n) \in \mathbb{C}^n$.
\end{defn}

Notice that $\omega_j(p) = 0$ and $d\omega_j|_p = I$ for all $j$, where $I$ is the identity map. So its construction appears to be more intrinsic than Pinchuk's. However, the convergence is known only for the convex Kobayashi hyperbolic domains \cite{f}. In fact, we show here that there is a non-convex domain for which the Frankel scaling sequence diverges, even though the domain is biholomorphic to a bounded convex domain.

\begin{ex} \rm
Let $\Omega_1 = \{ (w, z) \in \mathbb{C}^2 \mid \textrm{Re}\,w + |z|^4 < 0 \}$. Note that $\Omega_1$ is biholomorphic to the Thullen domain $\{ |w|^2 + |z|^4 < 1 \}$, which is bounded and convex. Fix a point $p = (-1, 0) \in \Omega_1$. Consider a sequence $\left\{ \phi_j (w, z) := \left( \frac{1}{j^4}w, \frac{1}{j}z \right) \right\}$ of automorphisms of $\Omega_1$. Then the Frankel scaling map $\omega_j^{\Omega_1}$ with respect to $(p, \phi_j)$ is
\begin{equation} \nonumber
\omega_j^{\Omega_1} (w, z) = [d\phi_j |_p]^{-1} (\phi_j(w, z) - \phi_j (p)) = (w+1, z).
\end{equation}
Consequently, the sequence $\{ \omega_j^{\Omega_1} (w, z) \}$ converges to $(w+1, z)$.
\medskip

On the other hand, let $\psi$ be defined by $\psi(w, z) = (w - 2z^2, z)$ and $\Omega_2 := \psi(\Omega_1)$. Notice that $\Omega_2 = \{ (w, z) \in \mathbb{C}^2 \mid \textrm{Re}\,w + z^2 + \overline{z}^2 + |z|^4 < 0 \}$ and $\psi(p) = p$. Denote by $\widetilde{\phi_j} := \psi \circ \phi_j \circ \psi^{-1}$ which is in $Aut(\Omega_2)$. Then the Frankel scaling map $\omega_j^{\Omega_2}$ with respect to $(p, \widetilde{\phi_j})$ is
\begin{equation} \nonumber
\omega_j^{\Omega_2} (w, z) = [d\widetilde{\phi_j} |_{\psi(p)}]^{-1} (\widetilde{\phi_j}(w, z) - \widetilde{\phi_j}(p))
= \left(\begin{array}{cc} w + 2(1 - j^2)z^2 + 1 \\ z \end{array} \right).
\end{equation}

Observe that every subsequence of $\{ \omega_j^{\Omega_2} (w, z) \}$ diverges.
\end{ex}

\begin{rmk} \rm
On the other hand, Frankel's scaling sequence can converge on a certain nonconvex domain. In such a case, the convergence is preserved through $\mathbb{C}$-affine biholomorphic transformations.

Let $\Omega$, $p \in \Omega$ and $\{\phi_j\} \subset Aut(\Omega)$ be given as in Definition 3.1. Let $\psi$ be a nonsingular $\mathbb{C}$-affine map and denote by $\widetilde{\phi_j} := \psi \circ \phi_j \circ \psi^{-1}$. Then the Frankel scaling map with respect to $(\psi(\Omega), \psi(p), \widetilde{\phi_j})$ is
\begin{equation} \nonumber
 [d \widetilde{\phi_j} |_{\psi (p)}]^{-1} (\widetilde{\phi_j}(z) - \widetilde{\phi_j} (\psi (p))) = d \psi_p [d \phi_j |_p]^{-1} (\phi_j (\psi^{-1}(z)) - \phi_j (p)).
\end{equation}
Notice that the right hand side is the composition of a nonsingular matrix $d \psi_p$ and the Frankel scaling map with respect to $(\Omega, p, \phi_j)$. So the convergence is invariant under the nonsingular affine transformations.
\end{rmk}

In Example 3.2, observe that $\psi^{-1}$ removes the harmonic term $z^2 + \bar{z}^2$, whose vanishing order is smaller than the principle term $|z|^4$, in the expression of the defining function of $\Omega_2$. So one would hope that taking a coordinate change map $\Psi \in Aut(\mathbb{C}^2)$, that removes harmonic terms from the defining function of given domain at the limit point of the sequence $\{\phi_j(p)\}$, may be enough to make the (adjusted) Frankel scaling sequence converge. However, the following example shows that this is not true in general.

\begin{ex} \rm
Let $\Omega_3 = \{ (w, z) \in \mathbb{C}^2 \mid \textrm{Re}\,w + 4z\overline{z}^3 + 6|z|^4 + 4z^3 \overline{z} < 0 \}$ and fix the point $p = (1, i) \in \Omega_3$. We consider the map
\begin{equation} \nonumber
\begin{split}
&\phi_{\mu}(w, z)
\\
=& \left(\frac{1}{\mu^8}w + \frac{8i(\mu-1)}{\mu^8}z^3 - \frac{12(\mu-1)^2}{\mu^8}z^2 - \frac{8i(\mu-1)^3}{\mu^8}z + \frac{2(\mu-1)^4}{\mu^8}, \frac{1}{\mu^2}z + \frac{\mu i - i}{\mu^2}\right).
\end{split}
\end{equation}
Then $\{\phi_{\mu}\}_{\mu} \subset Aut(\Omega_3)$. Notice that $\phi_{\mu}(p)$ converges to the origin $\mathbf{0}$ as $\mu$ goes to infinity. So we investigate whether no adjustments are needed since the defining function of $\Omega_3$ has no harmonic terms in $z$. Observe that the Frankel scaling map $\omega_{\mu}^{\Omega_3}$ with respect to $(p, \phi_{\mu})$ is:
$$
\omega_{\mu}^{\Omega_3}(w, z) = (w + 8i(\mu-1)z^3 - 12(\mu-1)^2z^2 + 24i\mu(\mu-1)z + 12\mu^2 - 8\mu - 5, z - i).
$$
It is now clear that every subsequence of $\{ \omega_{\mu}^{\Omega_3} \}$ diverges.
\end{ex}
In the light of these examples, the following question arises naturally: {\it for a given $(\Omega, p, \phi_j)$ as in Definition 3.1, would there exist a map $\Psi \in Aut(\mathbb{C}^2)$ such that the Frankel scaling sequence with respect to $(\Psi(\Omega), \Psi(p), \Psi \circ \phi_j \circ \Psi^{-1})$ converges?} In Example 3.2, the map $\psi^{-1}$ performs the role for $(\Omega_2, p, \widetilde{\phi_j})$, while finding such a map is not easy in Example 3.3. We leave this for a later study. However, the following adjustment can be a reasonable alternative.
\medskip

\begin{defn} \rm
Let $\Omega$, $p$ and $\{\phi_j\}$ be as in Definition 3.1. Consider the sequence $\{ \psi_j \} \subset Aut(\mathbb{C}^n)$ converging to another automorphism $\widehat{\psi}$ uniformly on compact sets of $\mathbb{C}^n$. Now we define the {\it modification} (i.e., {\it modified Frankel scaling sequence})
\begin{equation} \nonumber
\omega_j (\mathbf{z}) := [d(\psi_j \circ \phi_j \circ \psi_j^{-1}) |_{\psi_j(p)}]^{-1} (\psi_j \circ \phi_j \circ \psi_j^{-1}(\mathbf{z}) - \psi_j \circ \phi_j (p))
\end{equation}
by $\{\psi_j\}$ of the original Frankel scaling sequence.
\end{defn}

\begin{thm}
Let $\Omega$ be a bounded domain in $\mathbb{C}^n$ which admits a boundary accumulation automorphism orbit $\{\phi_j(p)\}$. Assume that there is a sequence $\{ \psi_j \} \subset Aut(\mathbb{C}^n)$ satisfying:
\begin{enumerate}
\item $\{\psi_j\}$ converges to $\widehat{\psi} \in Aut(\mathbb{C}^n)$ uniformly on compact sets of $\mathbb{C}^n$.
\item There is a sequence $\{ D_j\}$ of $\mathbb{C}$-affine maps so that the sequence $\{ D_j \circ \psi_j \circ \phi_j \}$ converges to a certain biholomorphism-into $\mathbb{C}^n$ uniformly on compact subsets of $\Omega$.
\end{enumerate}
Then the modified Frankel scaling sequence by $\{\psi_j\}$ has a subsequence that converges to a biholomorphism-into $\mathbb{C}^n$ uniformly on compact subsets of $\widehat{\psi}(\Omega)$.
\end{thm}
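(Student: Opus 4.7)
The plan is to reduce the convergence of the modified Frankel scaling sequence $\{\omega_j\}$ to the already-convergent sequence $F_j := D_j \circ \psi_j \circ \phi_j$ via a short algebraic identity, and then transport the resulting convergence from $\Omega$ to $\widehat{\psi}(\Omega)$ using that $\psi_j\to\widehat\psi$ uniformly on compact sets. First I would fix $\mathbf{z}' \in \Omega$ and evaluate $\omega_j$ at $\psi_j(\mathbf{z}') \in \psi_j(\Omega)$. Since $\psi_j \circ \phi_j = D_j^{-1} \circ F_j$ and $D_j^{-1}$ is $\mathbb{C}$-affine, its translation part cancels in the difference:
$$
\psi_j\!\circ\!\phi_j\!\circ\!\psi_j^{-1}(\psi_j(\mathbf{z}')) - \psi_j\!\circ\!\phi_j(p) = (dD_j^{-1})\bigl(F_j(\mathbf{z}') - F_j(p)\bigr).
$$

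Next I would massage the differential factor. Writing $\Phi_j := \psi_j\circ\phi_j\circ\psi_j^{-1}$ and applying the chain rule gives $d\Phi_j|_{\psi_j(p)} = d\psi_j|_{\phi_j(p)}\cdot d\phi_j|_p\cdot[d\psi_j|_p]^{-1}$, while $dF_j|_p = dD_j\cdot d\psi_j|_{\phi_j(p)}\cdot d\phi_j|_p$. A direct cancellation then yields the key identity
$$
[d\Phi_j|_{\psi_j(p)}]^{-1}\cdot(dD_j^{-1}) = d\psi_j|_p\cdot[dF_j|_p]^{-1},
$$
so that
$$
\omega_j(\psi_j(\mathbf{z}')) = d\psi_j|_p\cdot[dF_j|_p]^{-1}\cdot(F_j(\mathbf{z}') - F_j(p)).
$$
By hypothesis $F_j$ converges uniformly on compact subsets of $\Omega$ to some biholomorphism-into $F\colon\Omega\to\mathbb{C}^n$; Cauchy estimates give $dF_j|_p\to dF|_p$, which is nonsingular. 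Likewise, $d\psi_j|_p\to d\widehat\psi|_p$ is nonsingular because $\widehat\psi\in Aut(\mathbb{C}^n)$. Defining $\widetilde\omega_j(\mathbf{z}') := \omega_j(\psi_j(\mathbf{z}'))$, the sequence $\{\widetilde\omega_j\}$ therefore converges uniformly on compact subsets of $\Omega$ to
$$
\widetilde\omega(\mathbf{z}') := d\widehat\psi|_p\cdot[dF|_p]^{-1}\cdot(F(\mathbf{z}') - F(p)),
$$
which is manifestly a biholomorphism-into $\mathbb{C}^n$, being a composition of $F$ with a nonsingular affine map.

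The final step is to transfer this convergence to $\widehat\psi(\Omega)$. Given a compact set $K \subset \widehat\psi(\Omega)$, I would choose a slightly larger compact neighborhood $K''$ of $\widehat\psi^{-1}(K)$ inside $\Omega$. Because $\psi_j\to\widehat\psi$ uniformly on compact sets of $\mathbb{C}^n$, so does $\psi_j^{-1}\to\widehat\psi^{-1}$ uniformly on $K$, and hence $\psi_j^{-1}(K)\subset K''$ for all sufficiently large $j$; in particular $K\subset\psi_j(\Omega)$, so $\omega_j$ is defined on $K$. Writing $\omega_j(\mathbf{w}) = \widetilde\omega_j(\psi_j^{-1}(\mathbf{w}))$ for $\mathbf{w}\in K$ and invoking the equicontinuity of $\{\widetilde\omega_j\}$ on $K''$ (which follows from locally uniform convergence on $\Omega$ and Cauchy estimates), I conclude that $\omega_j\to\widetilde\omega\circ\widehat\psi^{-1}$ uniformly on $K$.

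No serious obstacle stands in the way: the heart of the argument is the algebraic identity above, and the rest is routine passage to the limit. The one mildly delicate point is the last change of variables, since $\omega_j$ lives on the varying domains $\psi_j(\Omega)$ rather than on the fixed limit domain $\widehat\psi(\Omega)$; this is precisely where the assumption $\psi_j\to\widehat\psi$ in $Aut(\mathbb{C}^n)$ (rather than a mere pointwise or boundary convergence) is used.
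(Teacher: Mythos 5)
Your proposal is correct and follows essentially the same route as the paper: your identity $\omega_j(\psi_j(\mathbf{z}')) = d\psi_j|_p\cdot[dF_j|_p]^{-1}\cdot(F_j(\mathbf{z}')-F_j(p))$ is exactly the paper's factorization $\omega_j = A_j\circ\sigma_j\circ\psi_j^{-1}$ with the affine map $A_j$ written out explicitly and its translation part cancelled. The only cosmetic difference is that you pull everything back to the fixed domain $\Omega$ and transfer to $\widehat{\psi}(\Omega)$ at the end, whereas the paper works directly on the varying domains $\psi_j(\Omega)$ via normal set convergence.
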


\begin{proof}
Let $\sigma_j := D_j \circ \psi_j \circ \phi_j$ and $\widehat{\sigma} := \lim_{j \to \infty} \sigma_j$. Let $\omega_j$ represent the modified Frankel scaling map by $\psi_j$. Write $\widetilde{\phi_j} := \psi_j \circ \phi_j \circ \psi_j^{-1}$, then
\begin{equation} \nonumber
\omega_j (\mathbf{z}) := [d\widetilde{\phi_j} |_{\psi_j(p)}]^{-1} (\widetilde{\phi_j}(\mathbf{z}) - \psi_j \circ \phi_j(p)).
\end{equation}
Define the $\mathbb{C}$-affine map $A_j$ of $\mathbb{C}^n$ by
\begin{equation} \nonumber
A_j (\mathbf{z}) := [d\widetilde{\phi_j} |_{\psi_j(p)}]^{-1} (D_j^{-1} (\mathbf{z}) - \psi_j \circ \phi_j(p)).
\end{equation}
Now, $A_j$ makes the following diagram commute.
\bigskip

\begin{figure}[!h]
\centering
\includegraphics[width=0.6\columnwidth]{./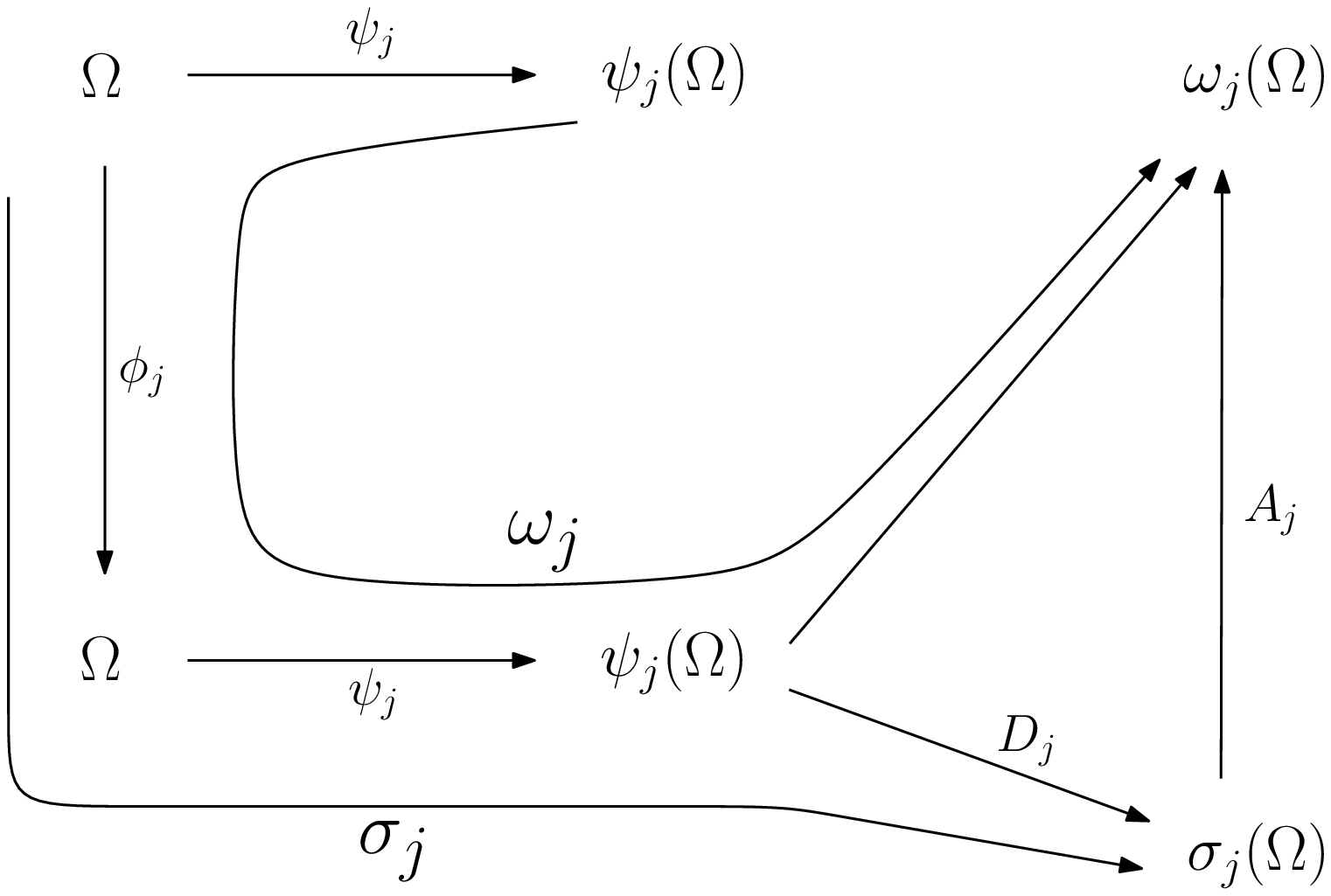}
\end{figure}
\begin{center}
{\bf Fig. 2.} A relationship of j-th scaling maps.
\end{center}
\bigskip
Notice that the map $A_j$ enjoys the following properties:
\begin{itemize}
\item $A_j |_{\sigma_j(\Omega)} = \omega_j \circ \psi_j \circ \sigma_j^{-1}$.
\item $A_j(\sigma_j(p)) = \mathbf{0}$, for all $j$.
\item $dA_j$ converges to a nonsingular matrix.
\end{itemize}
The first two properties follow directly from the construction of $A_j$, and the third is nothing but
\begin{equation} \nonumber
\lim_{j \rightarrow \infty} dA_j|_{\sigma_j(p)} = \lim_{j \rightarrow \infty} d\omega_j |_{\psi_j(p)} \circ d\psi_j |_p \circ d\sigma_j^{-1} |_{\sigma_j(p)} = d\widehat{\psi}|_p \circ d\widehat{\sigma}^{-1} |_{\widehat{\sigma}(p)}.
\end{equation}
Hence the sequence $\{A_j\}$ converges to some nonsingular $\mathbb{C}$-affine map, say $\widehat{A}$, satisfying $\widehat{A}(\widehat{\sigma}(p)) = \mathbf{0}$. Notice that $\omega_j = A_j \circ \sigma_j \circ \psi_j^{-1}$ and $\psi_j(\Omega)$ converges to $\widehat{\psi}(\Omega)$ in the sense of normal set convergence. Hence the uniform convergence of sequences $\{A_j\}$, $\{\sigma_j\}$ and $\{\psi_j^{-1}\}$ implies that the sequence $\{\omega_j\}$ converges to $\widehat{A} \circ \widehat{\sigma} \circ \widehat{\psi}^{-1}$ uniformly on compact subsets of $\widehat{\psi}(\Omega)$. This proves the theorem.
\end{proof}

\begin{rmk} \rm
If $\Omega$ is convex, then $\psi_j$ satisfying the hypotheses (1) and (2) automatically exist, affine maps, as demonstrated in \cite{kk}. It is clear now that Theorem 3.6, with Remark 3.3, generalizes the convergence theorem of Frankel for convex domains.
\end{rmk}
\medskip

Recall that the sequence of centering maps $\{\Psi_j^{\Omega'} \circ \Psi_{\widehat{p}}^{\Omega}\} \subset Aut(\mathbb{C}^2)$ constructed in Section 2.2. Combined with Theorem 2.2, Theorem 3.6 implies the following:

\begin{thm}
Let $\Omega$ be a bounded domain in $\mathbb{C}^2$ with smooth pseudoconvex boundary. Assume that $Aut(\Omega)$ admits a boundary accumulating automorphism orbit $\{\phi_j(p)\}$ converging to $\widehat{p} \in \partial\Omega$. If $\widehat{p}$ is of finite type in the sense of D'Angelo, then there exists a modified Frankel scaling sequence, by $\{\Psi_j^{\Omega'} \circ \Psi_{\widehat{p}}^{\Omega}\}$, that converges to a 1-1 holomorphic map into $\mathbb{C}^n$ uniformly on compact subsets of $\Psi_{\widehat{p}}^{\Omega}(\Omega)$.
\end{thm}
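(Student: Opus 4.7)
The plan is to apply Theorem 3.6 directly with the specific sequence $\psi_j := \Psi_j^{\Omega'} \circ \Psi_{\widehat{p}}^{\Omega}$ of centering maps constructed in Section 2.2. Once the two hypotheses of Theorem 3.6 are verified for this $\{\psi_j\}$, the conclusion of Theorem 3.8 drops out, and the limit domain will be identified as $\widehat{\psi}(\Omega) = \Psi_{\widehat{p}}^{\Omega}(\Omega)$, which matches the statement.

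To verify hypothesis (1): the map $\Psi_{\widehat{p}}^{\Omega}$ is a fixed element of $Aut(\mathbb{C}^2)$, being a composition of a translation, a dilation, and a triangular polynomial map. The discussion following the construction of the centering maps (around equation (\ref{U'})) asserts that $\Psi_j^{\Omega'}$ converges to the identity uniformly on compact subsets of $\mathbb{C}^2$, as a consequence of the continuity of the coordinate system $\Psi$ furnished by Proposition 2.3. Composing, $\psi_j \to \widehat{\psi} := \Psi_{\widehat{p}}^{\Omega} \in Aut(\mathbb{C}^2)$ uniformly on compact sets of $\mathbb{C}^2$, which is precisely hypothesis (1).

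To verify hypothesis (2): the Pinchuk stretching map factors as $\Lambda_j = D_j \circ \Psi_j^{\Omega'} \circ \Psi_{\widehat{p}}^{\Omega} = D_j \circ \psi_j$, where $D_j \colon (w,z) \mapsto (w/\epsilon_j,\, z/\delta_j)$ is a linear (hence $\mathbb{C}$-affine) map. Therefore $D_j \circ \psi_j \circ \phi_j$ is literally the $j$-th Pinchuk scaling map $\sigma_j = \Lambda_j \circ \phi_j$ of Section 2.2. By Theorem 2.2, a subsequence of $\{\sigma_j\}$ converges to a biholomorphism-into $\mathbb{C}^2$ uniformly on compact subsets of $\Omega$. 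Passing to this subsequence confirms hypothesis (2).

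Now Theorem 3.6, applied with the data $(\Omega, p, \{\phi_j\})$, the sequence $\{\psi_j\}$ above, and the dilations $\{D_j\}$, produces a subsequence of the modified Frankel scaling sequence by $\{\psi_j\} = \{\Psi_j^{\Omega'} \circ \Psi_{\widehat{p}}^{\Omega}\}$ that converges uniformly on compact subsets of $\widehat{\psi}(\Omega) = \Psi_{\widehat{p}}^{\Omega}(\Omega)$ to a $1$-$1$ holomorphic map into $\mathbb{C}^2$, as required. There is essentially no analytic obstacle in this proof: all the substantive content has been absorbed into Proposition 2.3, Theorem 2.2, and Theorem 3.6. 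The only point demanding care is the bookkeeping identification $\Lambda_j = D_j \circ \psi_j$, which makes manifest that the auxiliary data of the Pinchuk construction is exactly what the abstract criterion of Theorem 3.6 asks for.
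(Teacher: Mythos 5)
Your proposal is correct and is exactly the paper's argument: the paper proves Theorem 3.8 simply by remarking that it follows from Theorem 3.6 combined with Theorem 2.2, taking $\psi_j = \Psi_j^{\Omega'} \circ \Psi_{\widehat{p}}^{\Omega}$ and $D_j$ the Pinchuk dilations, so that $D_j \circ \psi_j \circ \phi_j = \Lambda_j \circ \phi_j$. You have merely written out explicitly the verification of the two hypotheses that the paper leaves implicit.
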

\bigskip

\section{Equivalence of two scalings}

Theorem 3.6 above can be restated as follows: If the Pinchuk scaling sequence built upon the sequence of the global centering maps $\{ \psi_j \}$ converges to $\widehat{\sigma}$, then the modified Frankel scaling sequence by $\{ \psi_j \}$ also converges to $\widehat{\omega}$, say.

Observe that $\widehat{\omega} = \widehat{A} \circ \widehat{\sigma} \circ \widehat{\psi}^{-1}$. Therefore we have

\begin{thm}
If the Pinchuk scaling sequence, built upon the sequence of the global centering maps $\{ \psi_j \}$, converges to $\widehat{\sigma}$. Then the modified Frankel scaling sequence by $\{ \psi_j \}$ converges to $\widehat{\omega}$, and
\begin{equation} \nonumber
\widehat{\omega} = \widehat{A} \circ \widehat{\sigma} \circ \widehat{\psi}^{-1}
\end{equation}
for some nonsingular $\mathbb{C}$-affine map $\widehat{A}$ and the limit map $\widehat{\psi}$ of $\{ \psi_j \}$.
\end{thm}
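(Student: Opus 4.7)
The proof is essentially a direct application of Theorem 3.6, with the centering maps from Section 2.2 playing the role of the sequence $\{\psi_j\}$ in Definition 3.5. So the plan is to verify that the hypotheses of Theorem 3.6 are satisfied in this situation and then to extract the formula for $\widehat{\omega}$ from its proof.

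First I would unpack the structure of the Pinchuk scaling sequence constructed in Section 2.2: namely, $\sigma_j = \Lambda_j \circ \phi_j = D_j \circ \Psi_j^{\Omega'} \circ \Psi_{\widehat{p}}^{\Omega} \circ \phi_j$. Setting $\psi_j := \Psi_j^{\Omega'} \circ \Psi_{\widehat{p}}^{\Omega}$, this is exactly of the form $\sigma_j = D_j \circ \psi_j \circ \phi_j$ appearing in hypothesis (2) of Theorem 3.6. By the assumption of Theorem 4.1, $\sigma_j$ converges to a biholomorphism-into $\widehat{\sigma}$, so hypothesis (2) is verified. Hypothesis (1) follows from the continuity of the coordinate system $\Psi$ in Proposition 2.3: since $\Psi_j^{\Omega'} \to \mathrm{id}$ uniformly on compact subsets of $\mathbb{C}^2$ (as noted after the statement of the centering map construction), the sequence $\{\psi_j\}$ converges to $\widehat{\psi} := \Psi_{\widehat{p}}^{\Omega}$, which is itself an automorphism of $\mathbb{C}^2$ (a composition of a translation, a dilation and a triangular map).

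With both hypotheses in hand, Theorem 3.6 applies and gives us the convergence of the modified Frankel scaling sequence by $\{\psi_j\}$ to a biholomorphism-into, uniformly on compact subsets of $\widehat{\psi}(\Omega)$. Denoting this limit by $\widehat{\omega}$, the explicit identification $\widehat{\omega} = \widehat{A} \circ \widehat{\sigma} \circ \widehat{\psi}^{-1}$ is read off directly from the last paragraph of the proof of Theorem 3.6, where the sequence of $\mathbb{C}$-affine maps $A_j$ is constructed so that the diagram commutes, is shown to converge to a nonsingular $\mathbb{C}$-affine map $\widehat{A}$, and satisfies $\omega_j = A_j \circ \sigma_j \circ \psi_j^{-1}$.

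There is no substantial obstacle here; the content of Theorem 4.1 really is that Theorem 3.6 specializes to the Pinchuk setting when one takes $\{\psi_j\}$ to be the centering maps. The only point worth being careful about is confirming that the centering maps genuinely converge to an element of $\mathrm{Aut}(\mathbb{C}^2)$ (not merely to a holomorphic map), which is why it matters that $\Psi_{\widehat{p}}^{\Omega}$ is an automorphism of $\mathbb{C}^2$ and that the convergence $\Psi_j^{\Omega'} \to \mathrm{id}$ is uniform on compact subsets. Once this is noted, the theorem is a one-line consequence of Theorem 3.6.
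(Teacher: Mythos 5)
Your proposal is correct and follows the paper's own route exactly: the paper likewise treats Theorem 4.1 as a restatement of Theorem 3.6 with $\psi_j = \Psi_j^{\Omega'} \circ \Psi_{\widehat{p}}^{\Omega}$, reading off $\widehat{\omega} = \widehat{A} \circ \widehat{\sigma} \circ \widehat{\psi}^{-1}$ from the commuting-diagram construction of the affine maps $A_j$ in the proof of Theorem 3.6. If anything, you supply slightly more detail than the paper by explicitly checking that the centering maps converge to the automorphism $\Psi_{\widehat{p}}^{\Omega}$ of $\mathbb{C}^2$.
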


\begin{rmk} \rm
If the given domain is bounded and convex, Kim and Krantz \cite{kk} proved that Pinchuk's scaling sequence converges, that $\widehat{\psi}$ turns out to be a $\mathbb{C}$-affine biholomorphism, and consequently that the limits of two scalings are $\mathbb{C}$-affinely equivalent. In this regard, Theorem 4.1 generalizes their result to a nonconvex case.
\end{rmk}
\bigskip

\section{Appendix}

We now prove Proposition 2.3. For convenience, we restate the proposition.

\keylemma*

\begin{proof}
Define the translation map by $T_q(z) := z-q$. Since the outward unit normal vector of $\partial\Omega$ at $p$ is $(1, 0)$, the implicit function theorem guarantees that there is a neighborhood $U$ of $p$ such that
$$
T_p(\Omega \cap U) = \{ (w, z) \mid \textrm{Re}\,w + P_p (z, \bar{z}) + \textrm{Im}\,w \cdot Q_p (\textrm{Im}\,w, z, \bar{z}) < 0 \}
$$
for some real valued smooth function $P_p$ and $Q_p$ with the conditions $\nu(P_p) \ge 2$ and $\nu(Q_p) \ge 1$. Now, one can obtain
$$
T_q(\Omega \cap U) = \{ (w, z) \mid \textrm{Re}\,w + P_q (z, \bar{z}) + b_q \cdot \textrm{Im}\,w + \textrm{Im}\,w \cdot Q_q (\textrm{Im}\,w, z, \bar{z}) < 0 \}
$$
for some real constant $b_q$ and real valued smooth functions $P_q$ and $Q_q$ with the conditions $\nu(P_q) \ge 1$ and $\nu(Q_p) \ge 1$. Notice that all of $P_q$, $Q_q$ and $b_q$ vary continuously with respect to $q \in \partial \Omega \cap U$, and hence $\lim_{q \to p} P_q = P_p$, $\lim_{q \to p} Q_q = Q_p$ and $\lim_{q \to p} b_q = 0$. Now we take the coordinate change $S_q : (w, z) \mapsto ((1-ib_q)w, z)$ on $T_q(\Omega)$. Denote by $c_q := (1-ib_q)$. Then
$$
S_q \circ T_q(\Omega \cap U) = \left\{ (w, z) \,\Big|\, \textrm{Re}\,w + P_q (z, \bar{z}) + \textrm{Im} \left( \frac{w}{c_q} \right) Q_q \left(\textrm{Im} \left( \frac{w}{c_q} \right), z, \bar{z} \right) < 0 \right\}.
$$
Define $H_{q, 1} (z, \bar{z})$ by the harmonic part of $P_q (z, \bar{z})$ of degree $r$ as follows:
$$
H_{q, 1} (z, \bar{z}) := 2 \textrm{Re}\,h_{q, 1} (z) \textrm{ where } h_{q, 1}(z) := \sum_{j = 1}^{r} \frac{\partial^j P_q (z, \bar{z})}{\partial z^j} \bigg|_0 \cdot \frac{z^j}{j!}.
$$
Let $P_{q, 1} := P_q - H_{q, 1}$, then $P_{q, 1} (z, \bar{z})$ has no harmonic terms of degree less than or equal to $r$. Now consider the coordinate change $\psi_{q, 1} : (w, z) \mapsto (w + 2h_{q, 1} (z), z)$. Then,
\begin{equation} \nonumber
\begin{split}
\psi_{q, 1} &(S_q(T_q(\Omega \cap U)))
\\
=& \bigg\{ \textrm{Re}\,w + P_{q, 1} (z, \bar{z}) + \textrm{Im} \left( \frac{-2h_{q, 1} (z)}{c_q} \right) Q_q \left( \textrm{Im} \left( \frac{w - 2h_{q, 1} (z)}{c_q} \right), z, \bar{z} \right)
\\
&\quad + \textrm{Im} \left( \frac{w}{c_q} \right) Q_q \left( \textrm{Im}\left( \frac{w - 2h_{q, 1} (z)}{c_q} \right), z, \bar{z} \right) < 0 \bigg\}.
\end{split}
\end{equation}
Denote by
$$
R_{q, 1} \left( \textrm{Im} \left( \frac{w}{c_q} \right), z, \bar{z} \right) := \textrm{Im} \left( \frac{ - 2h_{q, 1} (z)}{c_q} \right) Q_q \left( \textrm{Im} \left( \frac{ w - 2h_{q, 1} (z)}{c_q} \right), z, \bar{z} \right),
$$
and
\begin{equation} \nonumber
\begin{split}
Q_{q, 1} \left( \textrm{Im} \left( \frac{w}{c_q} \right), z, \bar{z} \right) :=& \, Q_q \left( \textrm{Im}\left( \frac{w - 2h_{q, 1} (z)}{c_q} \right), z, \bar{z} \right)
\\
+& \left( \textrm{Im} \left( \frac{w}{c_q} \right) \right)^{-1} \left( R_{q, 1} \left( \textrm{Im} \left( \frac{w}{c_q} \right), z, \bar{z} \right) - R_{q, 1} (0, z, \bar{z}) \right).
\end{split}
\end{equation}
Then $R_{q, 1}$ and $Q_{q, 1}$ vary continuously with respect to $q$ and the local defining function of $\psi_{q, 1} (S_q(T_q(\Omega \cap U)))$ above can be rewritten as follows:
\begin{equation} \nonumber
\begin{split}
&\psi_{q, 1} \circ S_q \circ T_q(\Omega \cap U)
\\
=& \bigg\{ (w, z) \,\Big|\, \textrm{Re}\,w + P_{q, 1} (z, \bar{z}) + R_{q, 1} (0, z, \bar{z}) + \textrm{Im} \left( \frac{w}{c_q} \right) Q_{q, 1} \left( \textrm{Im} \left( \frac{w}{c_q} \right), z, \bar{z} \right) < 0 \bigg\}.
\end{split}
\end{equation}
Notice that $\nu(R_{q, 1} (0, z, \bar{z})) \ge \nu(h_{q, 1}(z)) + \nu(Q_q) > 1$. Now we construct the following functions, varying continuously with respect to $q$, inductively for $j = 1, 2, \cdots, r$.

\begin{itemize}
\item $R_{q, 0} := P_q$, $Q_{q, 0} := Q_q$.
\item $H_{q, j} (z, \bar{z}) := 2 \textrm{Re}\,h_{q, j} (z) \textrm{ where } h_{q, j}(z) := \sum_{k = j}^{r} \frac{\partial^k R_{q, j-1} (z, \bar{z})}{\partial z^k} \big|_0 \cdot \frac{z^k}{k!}$.
\item $P_{q, j} := R_{q, j-1} - H_{q, j}$.
\item $\psi_{q, j} : (w, z) \mapsto (w + 2h_{q, j} (z), z)$
\item $R_{q, j} \left( \textrm{Im} \left( \frac{w}{c_q} \right), z, \bar{z} \right) := \textrm{Im} \left( \frac{- 2h_{q, j} (z)}{c_q} \right) Q_{q, j-1} \left( \textrm{Im} \left( \frac{ w-2h_{q, j} (z)}{c_q} \right), z, \bar{z} \right)$.
\item $Q_{q, j} \left( \textrm{Im} \left( \frac{w}{c_q} \right), z, \bar{z} \right) := Q_{q, j-1} \left( \textrm{Im}\left( \frac{w - 2h_{q, j} (z)}{c_q} \right), z, \bar{z} \right)$ \newline \phantom{$Q_{q, j} \left( \textrm{Im} \left( \frac{w}{c_q} \right), z, \bar{z} \right)$} $+ \left( \textrm{Im} \left( \frac{w}{c_q} \right) \right)^{-1} \left( R_{q, j} \left( \textrm{Im} \left( \frac{w}{c_q} \right), z, \bar{z} \right) - R_{q, j} (0, z, \bar{z}) \right)$.
\end{itemize}
Define $\psi_q := \psi_{1, r} \circ \cdots \circ \psi_{q, 1}$. Then $\psi_q$ varies continuously with respect to $q$ and satisfies:
\begin{equation} \nonumber
\begin{split}
\psi_q \circ S_q \circ T_q(\Omega \cap U) = \bigg\{ (w, z) \,\Big|\, &\textrm{Re}\,w + \Sigma_{j=1}^{r} P_{q, j} (z, \bar{z}) + R_{q, r}(0, z, \bar{z}) 
\\
&+ \textrm{Im} \left( \frac{w}{c_q} \right) Q_{q, r} \left( \textrm{Im} \left( \frac{w}{c_q} \right), z, \bar{z} \right) < 0 \bigg\}.
\end{split}
\end{equation}
Notice that $\Sigma_{j=1}^{r} P_{q, j} (z, \bar{z}) + R_{q, r}(0, z, \bar{z})$ has no harmonic terms of degree less than or equal to $r$, $Q_{q, r}$ has no constant term, and $\nu(R_{q, r}(0, z, \bar{z})) > r$. Set $\Psi_q := \psi_q \circ S_q \circ T_q$, $P := \Sigma_{j=1}^{r} P_{q, j}$, $R(z, \bar{z}) := R_{q, r}(0, z, \bar{z})$ and $Q := Q_{q, r}$. Then the first assertion of the proposition follows.
\smallskip

To prove the second, fix $q \in \partial\Omega \cap U$ and $r = 2k$, the type of $q$. Denote $\rho_q(w, z)$ by the local defining function of $\Psi_q (\Omega)$ near $q$ as in Proposition 2.3. Recall that $P$ has no harmonic terms. So it is sufficient to prove that $\nu(P) = 2k$.

Suppose $\nu(P) > 2k$. Then for the analytic disc $\delta_1 : z \mapsto (0, z)$, the vanishing order of a composition $\rho_q \circ \delta_1$ is larger than $2k$ and this contradicts that $\Delta(p) = 2k$. So $\nu(P) \le 2k$.

Now assume that $\nu(P) < 2k$. Then there is a type-realizing holomorphic disc $\delta_2 : z \mapsto (f(z), g(z))$, so that $\frac{\nu(\rho_q \circ \delta_2)}{\nu(\delta_2)} = 2k$. If $\nu(f) \le \nu(g)$ then $f$ is not identically zero, and hence
\begin{equation} \nonumber
\frac{\nu(\rho_q \circ \delta_2)}{\nu(\delta_2)} = \phantom{.} \frac{\min \{ \nu(\textrm{Re}\,f), \nu(P) \nu(g) \} }{\nu(f)} \le 1.
\end{equation}
The first equality holds since $P$ has no harmonic terms while $\textrm{Re}\,f$ is harmonic. But this is impossible because $\frac{\nu(\rho_q \circ \delta_2)}{\nu(\delta_2)} = 2k$. So $\nu(f) > \nu(g)$ and hence
\begin{equation} \nonumber
\frac{\nu(\rho_q \circ \delta_2)}{\nu(\delta_2)} = \frac{\min \{ \nu(\textrm{Re}\,f), \nu(P) \nu(g) \} }{\nu(g)} \le \nu(P) < 2k.
\end{equation}
The last inequality holds due to the assumption. However, this contradicts that $\frac{\nu(\rho_q \circ \delta_2)}{\nu(\delta_2)} = 2k$ and hence $\nu(P) = 2k$. Therefore the second assertion of the proposition follows.
\end{proof}

\end{document}